\patchcmd\Gread@eps{\@inputcheck#1 }{\@inputcheck"#1"\relax}{}{}
\newcommand{\intav}[1]{\mathchoice {\mathop{\vrule width 6pt height 3 pt depth  -2.5pt
\kern -8pt \intop}\nolimits_{\kern -6pt#1}} {\mathop{\vrule width
5pt height 3  pt depth -2.6pt \kern -6pt \intop}\nolimits_{#1}}
{\mathop{\vrule width 5pt height 3 pt depth -2.6pt \kern -6pt
\intop}\nolimits_{#1}} {\mathop{\vrule width 5pt height 3 pt depth
-2.6pt \kern -6pt \intop}\nolimits_{#1}}}
\def\polhk#1{\setbox0=\hbox{#1}{\ooalign{\hidewidth\lower1.5ex\hbox{`}\hidewidth\crcr\unhbox0}}}
\newcommand{\bd}{{\bf d}}
\newtheorem{theorem}{Theorem}
\newtheorem{definition}{Definition}
\newtheorem{lemma}{Lemma}
\newtheorem{corollary}{Corollary}
\newtheorem{proposition}{Proposition}
\newtheorem{remark}{Remark}
\newtheorem{assumption}{A}
\newcommand{\Tr}{\text{Tr}}
\newcommand{\N}{\mathbb{N}}
\newcommand{\rW}[2]{W^{2,#1}(B_{#2})\cap W_g^{1,#1}(B_{#2})}
\begin{document}

\title{ Two-phase free boundary problems for a class of fully nonlinear double-divergence systems}

\author{ Pêdra D. S. Andrade and Julio C. Correa}

\date{}

\maketitle

\begin{abstract}
\begin{spacing}{1.15}
In this article, we study a class of fully nonlinear double-divergence systems with free boundaries associated with a minimization problem. The variational structure of the Hessian-dependent functional plays a fundamental role in proving the existence of the minimizers and then the existence of the solutions for the system. In addition, we establish improvements in integrability for the equation in the double-divergence form. Consequently, we improve the regularity for the fully nonlinear equation in Sobolev and H\"older spaces.
\end{spacing}

\medskip

\noindent \textbf{Keywords}: Hessian-dependent functionals, fully nonlinear double-divergence system with free boundaries, improved regularity,  existence of the solutions.

\medskip 

\noindent \textbf{MSC2020}: 35B65; 35J35; 35R35; 35A01.
\end{abstract}

\vspace{.1in}
\section{Introduction}

We examine the minimization problem associated with the energy 
\begin{equation}\label{functional_main}
I[u]:= \int_{B_1} [F(D^2 u)]^p + \gamma_+\max(u, 0) - \gamma_- \min(u, 0),
\end{equation}
where $B_1$ denotes the open ball in $\mathbb{R}^d$ centered at the origin with radius $1$, $u$ belongs to a suitable class of admissible functions, $p>1$, $F$ is a fully nonlinear $(\lambda, \Lambda)$-elliptic operator, and $\gamma_+, \gamma_-$ are fixed nonnegative real numbers such that $\gamma_++\gamma_->0$. We exploit the variational structure of the functional \eqref{functional_main} to prove the existence of a minimizer for \eqref{functional_main} and to derive the fully nonlinear double-divergence systems \eqref{eq_main} below. Furthermore, we establish improved integrability in Lebesgue spaces and gains of regularity in Sobolev and H\" older spaces.

Consider a class of fully nonlinear double-divergence systems with free boundaries
\begin{spacing}{1.0} 
\begin{equation}\label{eq_main}
	\begin{cases}
		F(D^2u)\,=\,m^{1/(p-1)}&\;\;\;\;\;\mbox{in}\;\;\;\;\;B_1\\
		\left(F_{ij}(D^2u)\,m\right)_{x_ix_j}\,=\,\frac{\gamma_+}{p}\chi_{\{u>0\}}- \frac{\gamma_-}{p}\chi_{\{u<0\}}&\;\;\;\;\;\mbox{in}\;\;\;\;\;B_1,
	\end{cases}
\end{equation}
\end{spacing}
\vspace{.09in}
\noindent where $p>1$,  $\gamma_+$, $\gamma_-$ are fixed nonnegative real numbers such that $\gamma_++\gamma_->0$. In \eqref{eq_main}, $F_{ij}(M)$ denotes the first derivative of $F(M)$ with respect to the entry $m_{ij}$ of the   matrix $M= (m_{ij})_{i,j = 1}^d$ and $\chi_{A}$ stands for the characteristic function of the set $A$. Observe that the system \eqref{eq_main} requires $F$ to be of class $\mathcal{C}^1$, which implies $F_{ij}(M)$ is well-defined and $(\lambda, \Lambda)$-elliptic operator. Hence $F_{ij}(M)$ is Lipschitz continuous, then Rademacher's Theorem guarantees that Definition \ref{weak_solution} below is correct.

The system \eqref{eq_main} comprises a fully nonlinear partial differential equation and an inhomogeneous elliptic double-divergence equation. These equations have been studied extensively over the past fifty years and remain a topic of significant interest. Summarizing a substantial list of references related to the above-mentioned equations is out of the scope of this paper; we refer the reader to the monographs \cite{bkrsbook} and \cite{Caffarelli-Cabre1995}. In addition, we can see the system in \eqref{eq_main} as the Euler-Lagrange equation associated with the functional in \eqref{functional_main}.

Observe that the second equation in \eqref{eq_main}  has a discontinuous right-hand side depending on the sign of $u$. That is, the equation in double-divergence form can be rewritten in the following way:
\[
\left(F_{ij}(D^2u)\,m\right)_{x_ix_j}\,=
\left\{
\begin{array}{rcl}
\frac{\gamma_+}{p} & \mbox{in} &B_1\cap \{ u>0\} \vspace{0.3cm}\\
- \frac{\gamma_-}{p} & \mbox{in} &B_1\cap \{ u<0\}.
\end{array}
\right.
\]
It is essential to highlight that the discontinuities on the right-hand side as in \eqref{eq_main} characterize a free boundary problem, {\it i.e.,} where we know the behavior of the solutions in the sets $\{ u>0\}$ and $\{ u<0\}$. However, we do not know what happens when the solutions cross the free boundary $\Gamma(u)$.
We denote the free boundary by
\[
\Gamma(u):= \partial (\{ u>0\}) \cup \partial(\{ u<0\}).
\]
For instance, the free boundary $\Gamma(u)$ can behave as illustrated in Figure 1.
\begin{figure}[!h]
\begin{center}
 \includegraphics[width= 4cm]{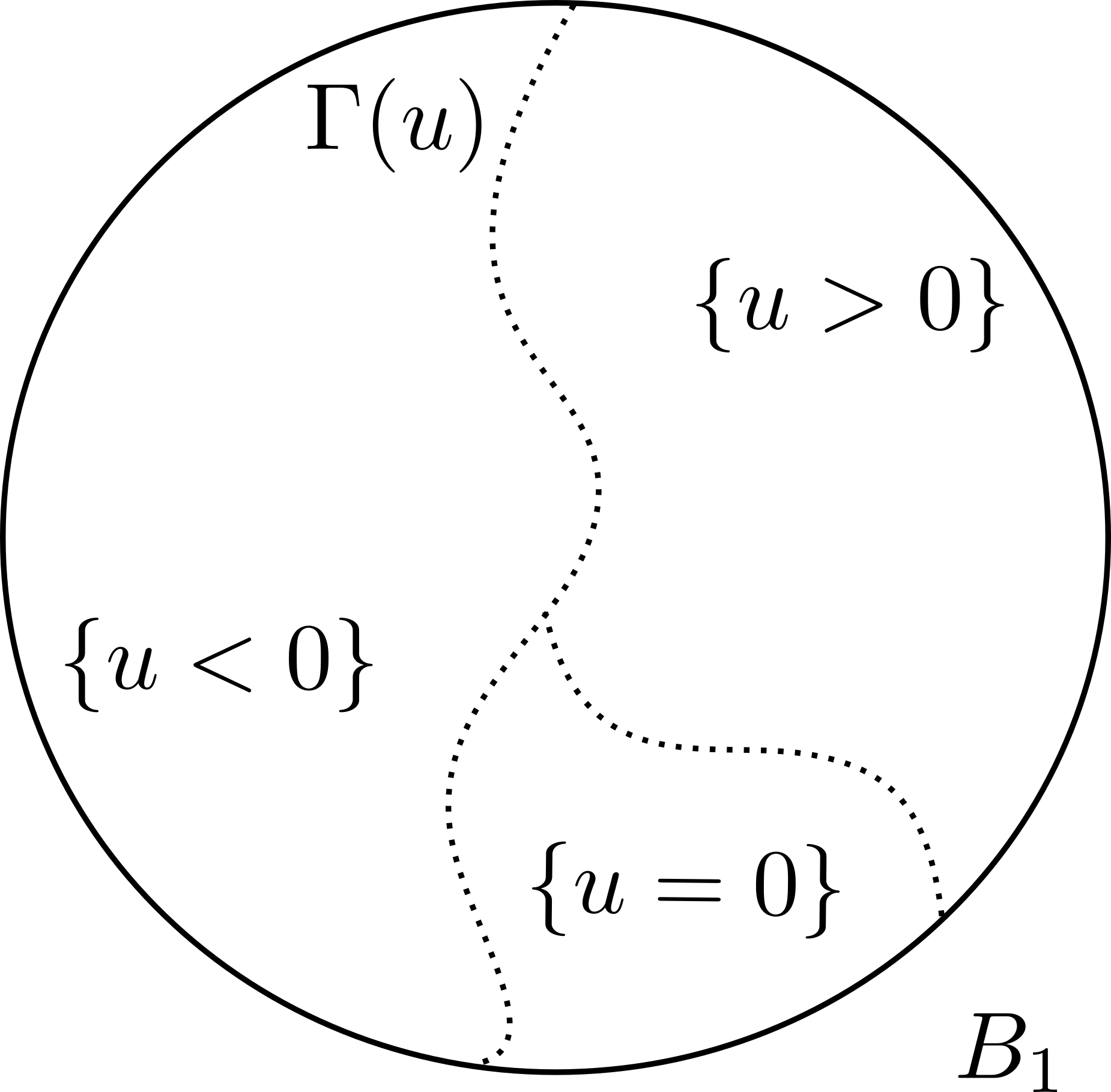}   
\end{center}
\caption{The free boundary $\Gamma(u)$ when the sets $\{ u>0\}$, $\{ u<0\}$, and $\{ u=0\}$ have positive measures.}
\end{figure}

Due to the absence of monotonicity formulas, studying the free boundary properties for this class of the energies as \eqref{functional_main} is very challenging. Another obstacle is that the techniques used to investigate the regularity of the solutions to the system \eqref{eq_main} do not allow us to establish the optimal regularity for the solutions, which is a key ingredient to studying the regularity of the free boundary. For these reasons, as far as we know, the study of the free boundary problem for \eqref{functional_main} remains an open question. 

Our analysis occurs at the intersection of several fields, such as calculus of variations, free boundary problems, and fully nonlinear double-divergence systems. The variational setting plays a crucial role since the Hessian-dependent functional combined with suitable conditions on the operator $F$ allow us to overcome the technical difficulties, such as the absence of coercivity and convexity of the functional $I$,  and the requirement that the coefficients $F_{ij}(M)$ be sufficiently regular for each $i, j \in\{1, \ldots, d \}$. Such cost functionals appear in many branches of mathematics. For instance, the Euler-Lagrange equation that is associated with the following energy
\begin{equation} \label{biharmonic_fun}
\mathcal{I}[u] := \int_{B_1} \big | \Tr(D^2 u)\big|^2 dx
\end{equation}
is the well-known biharmonic equation, where $\Tr(\cdot)$ stands for the trace of a symmetric matrix $M$. This type of energy arises from the equilibrium of thin plates. Also, in dimension $d=4$, \eqref{biharmonic_fun} is conformally invariant. We refer the reader to \cite{CGY1999} and \cite{CWY1999}. Regarding the obstacle problems for the functional \eqref{biharmonic_fun}, see \cite{CA1979} and \cite{CFT1982}.

Another important feature of the functionals depending on the Hessian is in the study of the nonconvex functionals of the form
\begin{equation} \label{noconvex_functional}
\mathcal{J}[u]:= \int_{B_1} \left(|Du|^2 - 1\right)^2 dx.
\end{equation}
Regularizing the functional in \eqref{noconvex_functional}, we obtain an approximated energy that is known in the literature as Aviles-Giga functional, which is convex with respect to higher order terms and a Hessian-dependent functional, namely
\begin{equation} \label{convex_functional}
\mathcal{J}_{\epsilon}[u]:= \int_{B_1} \left(|Du|^2 - 1\right)^2  + \epsilon^2 \|D^2 u \|^2dx.
\end{equation}
Notice that we recover the information in \eqref{noconvex_functional} as $\epsilon$ goes to zero in \eqref{convex_functional}.  Such limit leads to a phenomenon known as microstructures. For more details, see \cite{AG1987}, \cite{AG1996}; see also \cite{kohnicm}.

From the application point of view, the Hessian-dependent functionals also appear in the mechanics of solids.  For instance, the study of energy-driven pattern formation and nonlinear elasticity as in \cite{KO2018}. The ribbon’s energy is defined by 
\begin{equation*}
 E^{(h)}[u, v]:= \int_{\Omega} |{\mathcal M}(u, v)|^2 + h^2|{\mathcal B}(u, v)|^2 dx,
\end{equation*}
where $\Omega  =\left(-1/2, 1/2\right) \times(0, l)$ and   $h$ is a parameter that represents the thickness of the ribbon, and ${\mathcal M}$ (for membrane) and ${\mathcal B}$ (for bending) are symmetric tensors with physical interpretations. Here, ${\mathcal M}$ and ${\mathcal B}$ are given by
\[
\begin{aligned}
\mathcal{M} & =\mathrm{e}(u)+\frac{1}{2}\binom{\partial_1 v}{\partial_2 v+\omega x_1}^{\otimes 2}-\frac{1}{2}\left(\begin{array}{cc}
0 & \omega v \\
\omega v & \omega^2 \xi^2
\end{array}\right) \text { and }\, \, 
\mathcal{B} & = D^2 v+\left(\begin{array}{cc}
0 & \omega \\
\omega & 0
\end{array}\right),
\end{aligned}
\]
where $\mathrm{e}(u)$ is the linear strain, and the ribbon is twisted by an amount $\omega$ around the $x_2$ axis, and allowed to compress by an amount $\frac{1}{2} \omega^2 \xi^2 l$, where $0<\xi<\frac{1}{2}$.
Even though ${\mathcal M}$ depends only on lower-order terms, ${\mathcal B}$ depends on the Hessian of $v$. An interesting case is in the analysis of the behavior as $h$ approaches zero. Also, the analysis of blister patterns in thin films on compliant substrates in \cite{BK2015} is an example where higher-order functionals arise in the setting of mechanics of solids. 

In the context of the functionals depending on the Hessian with free boundaries, the authors in \cite{DH2022}, the authors showed the existence of the solutions for $\tilde{\mathcal{I}}$, which is given by
\[
 \int_{B_1^+}(\Delta u)^2 dx dy + \frac{2}{p} \int_{B_1'} \gamma_-(u^-)^p +  \gamma_+(u^+)^p dx,
\]
where ${B_1^+} = \{\, (x, y) \in \tilde{B}_1\,\, ;\, \, y>0 \}$ is the upper half-ball, $\tilde{B}_1$ is the open ball in $\mathbb{R}^{n+1}$ centered at the origin with radius $1$, $B_1' = \tilde{B}_1\cap \{ y =0 \} $, $\gamma_-, \gamma_+$ are positive constants, and $p>1$. In addition, by using the monotonicity formulas, the authors showed the optimal regularity of the solution and studied the free boundary's structure.  

More recently, the authors in \cite{Andrade-Pimentel} have studied a class of fully nonlinear mean-field games (for short, MFG) systems. They proved the existence of minimizers for the Hessian-dependent functional associated with the MFG system.  Exploiting the MFG structure, they established gains of regularity and showed that solutions to the MFG system exist. In addition, without assuming any convex property on the operator $F$, the authors obtained the existence of the minimizer for a relaxed problem. 

In \cite{CorreaPimentel2021}, the authors investigate the functional depending on the Hessian driven by a fully nonlinear operator, namely
\begin{equation} \label{funct_CP}
J[u]:= \int_{B_1} [F(D^2 u)]^p dx + \Lambda|\{u>0 \} \cap B_1|.
\end{equation}
Taking advantage of the variational setting, they established that there exists a minimizer for \eqref{funct_CP} and the existence of the solutions for the MFG systems. Also, the authors proved H\"older continuity for the solutions and improved integrability of the density. Moreover, they showed that the reduced boundary has a finite perimeter. 

 In \cite{CJK2022}, the authors have studied a parabolic version of the fully nonlinear MFG system with nonlocal and local diffusions, namely:
\begin{equation} \label{parabolicMFG}
\begin{cases}
- u_t &=F(\mathcal{L}u) + f(m) \;\;\;\mbox{on}\;\;\;\;\;(0, T)\times \mathbb{R}^d\\
m_t &=  \mathcal{L}^*(F^{\prime}(\mathcal{L}(u))m)\;\;\;\mbox{on}\;\;\;\;\;(0, T)\times \mathbb{R}^d\\
\end{cases}
\end{equation}
with initial and terminal conditions
\[
u(T)= g(m(T)) \quad \quad m(0)= m_0,
\]
where $\mathcal{L}$ stands for an infinitesimal generator of a Lévy process and $F\in \mathcal{C}^1(\mathbb{R})$ is a convex function with $F' \geq 0$ belonging to a H\"older space. The authors established the existence of the solutions to \eqref{parabolicMFG} and the uniqueness of solutions in the absence of strict monotonicity of couplings or strict convexity of Hamiltonians.
    
The paper's main contribution is to investigate the existence and regularity estimates of the solutions for a more general class of systems than the ones studied in \cite{Andrade-Pimentel} and \cite{CorreaPimentel2021}. We emphasize that, in our case, we do not have the adjoint structure as in the systems \cite{Andrade-Pimentel} and \cite{CorreaPimentel2021}. That means, we do not have the MFG structure in our framework. However, our approach is motivated by these papers. In addition,  a fundamental difference between the MFG system in \cite{Andrade-Pimentel} and the fully nonlinear double-divergence system with free boundaries in \eqref{eq_main} is the second equation for both systems.  In \cite{Andrade-Pimentel}, the second equation is homogeneous, whereas in our case, the right-hand side of the second equation is a discontinuous function that depends on the sign of $u$.

In this paper, we take advantage of the relationship between the variational formulation in \eqref{functional_main} and the fully nonlinear double-divergence system in \eqref{eq_main} to obtain improved regularity estimates and the existence of minimizers for \eqref{functional_main} in a set of the convex functions. Moreover, under an additional condition on the operator $F$, we prove the existence of minimizer in $W^{2,p}_g(B_1)\cap W^{1,p}_g(B_1)$, see Proposition \ref{Prop:Existence-of-minimizer}.
  
Furthermore, we show that solutions exist in the system \eqref{eq_main}. We report our findings in Theorem \ref{Theorem1}.  Then, as in \cite{Andrade-Pimentel}, we are able to prove gains of integrability by using the arguments in the seminal paper \cite{fabesstroock}, see Lemma \ref{lemma_int} and Lemma \ref{Gain-of-Integrability}.  Finally, we employ the Sobolev embedding Theorem combined with the gains of integrability to obtain further regularity for the solutions to the first equation of  \eqref{eq_main}.  This is the content of Theorem \ref{Gains-of-Regularity}. 

The remainder of this article is organized as follows: Section \ref{preliminaries} presents the main assumptions and gathers a few results used throughout the manuscript. In Section \ref{existence_sec}, we derive the Euler-Lagrange equation associated with the energy \eqref{functional_main}. In addition, we prove the existence of minimizers for \eqref{functional_main} and the existence of solutions to the system \eqref{eq_main}. In Section \ref{regularity_sec}, we establish gains of integrability for the solutions of the double-divergence equation. Consequently, we obtain improved regularity for the fully nonlinear equation in Sobolev
 and H\"older spaces. 
 
\section{Preliminaries} \label{preliminaries}
In this section, we detail the main assumptions and gather some auxiliary results used in the paper.

\subsection{Main assumptions}\label{subsec_assump}
Throughout this paper, $\mathcal{S}(d)$ stands for the space of real $d \times d$ symmetric matrices. In the sequel, we introduce the condition of the uniform ellipticity of the operator $F$.

\begin{assumption}[Uniform ellipticity]\label{A1}
We suppose the operator $F: \mathcal{S}(d) \rightarrow$ $\mathbb{R}$ is $(\lambda,\Lambda)$-uniformly elliptic for some $0<\lambda\leq \Lambda$. That is,
\begin{equation}\label{eq_lambdaeliptic}
	\lambda\|N\| \leq F(M+N)-F(M) \leq \Lambda\|N\|
\end{equation}
for every $M, N \in \mathcal{S}(d)$, with $N \geq 0 $. We also suppose $F(0)=0$.  
\end{assumption}

\begin{remark}\label{Remark:Assumptions}
It follows from A\ref{A1} that $F$ satisfies a coercivity condition over nonnegative symmetric matrices. Taking $M \equiv 0$ in the inequalities in \eqref{eq_lambdaeliptic}, we have
\[
	\lambda\|N\| \leq F(N) \leq \Lambda\|N\| \; \; \mbox{for every} \;  \;N \geq 0.
\] 
\end{remark}
\begin{remark} Set
	\[
	F_{ij}(M):= \dfrac{\partial F}{\partial m_{ij}}(M),
	\]
Notice that, if $F$ is an $(\lambda, \Lambda)$-uniformly elliptic operator and of class $\mathcal{C}^1$, then $F_{ij}$ satisfies 
	\begin{equation}\label{Derivative_F}
	\lambda |\xi|^2 \leq F_{ij}(M){\xi}_i{\xi}_j \leq \Lambda |\xi|^2 \quad \forall\, M\in \mathcal{S}(d)\quad \forall \,\xi \in \mathbb{R}^d.
	\end{equation}
 In particular, $F_{ij}(M)$ is Lipschitz in $M \in \mathcal{S}(d)$ for all $i, j$. 
\end{remark}

In what follows, we enforce a convexity condition on $F$.

\begin{assumption}[Convexity of the operator $F$]\label{A2}
We assume that the operator $F=F(M)$ is convex in $M \in \mathcal{S}(d)$.
\end{assumption}

It is important to highlight that our arguments to prove the existence of solutions to \eqref{eq_main} require $F$ to satisfy a coercivity condition in whole $\mathcal{S}(d)$. To that end, we extend A\ref{A1} in the following sense:
\begin{assumption}[Growth condition]\label{A3}
 Let $\lambda$ and $\Lambda$ be constants such that $0<\lambda\leq \Lambda$. We suppose that the operator $F$ satisfies
\[
	\lambda\|M\| \leq F(M) \leq \Lambda\|M\| \; \; \mbox{for every } \; \; M \in \mathcal{S}(d).
\]
\end{assumption}

The following is an example of operators that satisfy the growth condition: Let $A$ be a nonnegative function in $W_{loc}^{2,p}(B_1)$, $\varepsilon>0$ and define the operator
\[
	F(x,M):=A(x)\|M + \varepsilon I\| 
\]
where $M \in \mathcal{S}(d)$  and $I$ represents the identity matrix in $\mathcal{S}(d)$ and $x \in B_1$. In fact, it is clear that the operator $F$ satisfies growth condition in the previous assumption.

Next, we introduce an assumption on the boundary data.

\begin{assumption}[Boundary data]\label{A4}
We assume $g\in W^{2,p}(B_1)$ to be a nontrivial function.
\end{assumption}

\subsection{Auxiliary results}
We begin this subsection by presenting the notion of $L^p$-viscosity solutions.

\begin{definition}[$L^p$-viscosity solutions]\label{def_lpviscosity}
Let $F:\mathcal{S}(d)\to\mathbb{R}$ be a fully nonlinear $(\lambda, \Lambda)$-elliptic operator as defined in A\ref{A1}, $p>d/2$ and $f\in L^p_{loc}(B_1)$. A continuous function $u$ in $B_1$ is an $L^{p}$-viscosity supersolution (resp. subsolution) of 
\begin{equation}\label{eq-fullynonlinear}
	F(D^2u)=f\hspace{.2in}\mbox{in}\hspace{.2in}B_1
\end{equation}
if, for all $\varphi \in W_{\mathrm{loc}}^{2, p}(B_1)$, whenever $\varepsilon>0$, $\Omega$ is an open subset of $B_1$, and
\begin{align*}
 F\left(D^{2} \varphi(x)\right)-f(x) &\leq-\varepsilon \hspace{.2in} \mbox{a.e.}-x\in \Omega\\
(\mbox{resp. } F\left(D^{2} \varphi(x)\right)-f(x) &\geq+\varepsilon \hspace{.2in} \mbox{a.e.}-x\in \Omega),
\end{align*}
then $u-\varphi$ cannot have a local minimum (resp. maximum) in $\Omega$. Furthermore, we say that $u$ is an $L^p$-viscosity solution of \eqref{eq-fullynonlinear} when it is an $L^p$-viscosity supersolution and an $L^p$-viscosity subsolution.
\end{definition}

 The notion of $L^p$-viscosity solutions is necessary since $F$ and $f$ might not be continuous functions at the points in their respective domains. For a comprehensive account of the theory of $L^{p}$-viscosity solutions, we refer the reader to \cite{CafCraKocSwi96}.

In what follows, we introduce the definition of solution to the fully nonlinear double-divergence systems in \eqref{eq_main}.

 \begin{definition}[Weak solution]\label{weak_solution} A pair $(u,m)$ satisfying \eqref{eq_main} is called a weak solution provided
\begin{enumerate}
	\item[(i)] $u\in \mathcal{C}(B_1)$ and $m\in L^1(B_1)$ satisfies $m\geq 0$ in $B_1$;
	\item[(ii)] $u$ is an $L^p$-viscosity solution to 
	\[
		F(D^2u)\,=\,m^\frac{1}{p-1}\;\;\;\;\;\mbox{in}\;\;\;\;\;B_1;
	\]
	\item[(iii)] $m$ satisfies
	\[
		\int_{B_1}\left(F_{ij}(D^2u)\,m\right)\varphi_{x_ix_j}\,\bd x\,=\,\int_{B_1} f \varphi \,\bd x,
	\]
	for every $\varphi\in\mathcal{C}^\infty_c(B_1)$.
\end{enumerate}
\end{definition}

The energy functionals in this paper are set in appropriate Sobolev space. We proceed with a definition:
\begin{definition}[Sobolev spaces $\rW p1$]
 Let $p\in(d/2,d]$ be fixed. Given $g \in W^{2, p}\left(B_{1}\right)$, we say that $u \in \rW p1$ if $u \in W^{2, p}\left(B_{1}\right)$ and $u-g \in W_{0}^{1, p}\left(B_{1}\right)$.
\end{definition}

Our existence result relies on the weak lower semicontinuity of the functional $J$ within the class of convex functions belonging to $ W^{2,p}(B_1)\cap W^{1,p}_g(B_1)$, defined by
\begin{equation}\label{functional:fn}
J[u]:= \int_{B_1} [F(D^2 u)]^p.
\end{equation}
For clarity, we state the proposition here.
\begin{proposition}{\rm \cite[Proposition 3]{Andrade-Pimentel}} \label{prop:wlsc}
Assume that A\ref{A1} and A\ref{A2} hold true.  Let $(u_n)_{n\in\mathbb{N}}$ be a sequence of convex functions in $W^{2,p}(B_1)\cap W^{1,p}_g(B_1)$ such that 
\[
	D^2u_n\,\rightharpoonup \, D^2u_\infty\;\;\;\;\;\mbox{in}\;\;\;\;\;L^p(B_1,\mathbb{R}^{d^2}).
\]
Then, 
\[
	\liminf_{n\to\infty} J[u_n]\,\geq\,J[u_\infty].
\]
\end{proposition}

The proof of Proposition \ref{prop:wlsc} can be found in \cite[Proposition 3]{Andrade-Pimentel}. Next, we state Poincare's inequality for functions lacking compact support. This result is very useful in proving the existence of minimizers for \eqref{functional_main}. 

\begin{lemma}[Poincar\'e's inequality]\label{poincare}
Let $u \in W_{g}^{1, p}\left(B_{1}\right)$ and $C_{p}>0$ be the Poincar\'e's constant associated with $L^{p}\left(B_{1}\right)$ and the dimension $d$. Then for every $C<C_{p}$, there exists $C_{1}\left(C, C_{p}\right)>0$ and $C_{2} \geq 0$ such that
$$
\int_{B_{1}}|D u|^{p} d x-C \int_{B_{1}}|u|^{p} d x+C_{2} \geq C_{1}\left(\int_{B_{1}}|D u|^{p} d x+\int_{B_{1}}|u|^{p} d x\right).
$$
\end{lemma}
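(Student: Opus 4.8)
The plan is to deduce the stated estimate from the classical Poincar\'e inequality on $B_1$ for functions with boundary datum $g$, combined with a simple rescaling of constants. First I would recall that the Poincar\'e constant $C_p$ is characterized by the property that $\int_{B_1}|Dv|^p\,dx \ge C_p \int_{B_1}|v|^p\,dx$ for all $v \in W_0^{1,p}(B_1)$, and that for $u \in W_g^{1,p}(B_1)$ one has $u = (u-g) + g$ with $u - g \in W_0^{1,p}(B_1)$. Writing $v := u - g$, I would apply the homogeneous Poincar\'e inequality to $v$ and then absorb the $g$-dependent terms using the triangle inequality in $L^p$ together with Young's inequality, picking up an additive constant $C_2 = C_2(p,d,\|g\|_{W^{1,p}(B_1)})$ that does not depend on $u$. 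This produces an inequality of the form $\int_{B_1}|Du|^p\,dx + C_2' \ge \kappa \int_{B_1}|u|^p\,dx$ for some $\kappa > 0$ depending only on $C_p$, $p$, $d$.

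Next, given the target constant $C < C_p$, I would interpolate between the ``energy-controls-itself'' trivial bound $\int_{B_1}|Du|^p\,dx \ge \int_{B_1}|Du|^p\,dx$ and the Poincar\'e-type bound $\int_{B_1}|Du|^p\,dx + C_2' \ge \kappa\int_{B_1}|u|^p\,dx$. Concretely, for a parameter $t \in (0,1)$ to be chosen, I would write
\[
\int_{B_1}|Du|^p\,dx - C\int_{B_1}|u|^p\,dx + C_2
\;=\; (1-t)\!\int_{B_1}|Du|^p\,dx + t\!\left(\int_{B_1}|Du|^p\,dx - \tfrac{C}{t}\int_{B_1}|u|^p\,dx\right) + C_2,
\]
and choose $t$ close enough to $1$ that $C/t < C_p$ (possible precisely because $C < C_p$), so that the middle term, after adding a controlled multiple of $C_2'$, dominates a positive multiple of $\int_{B_1}|u|^p\,dx$. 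Collecting terms and taking $C_1 := \min\{1-t,\ c(t)\}$ for the appropriate small constant $c(t)>0$ extracted from the Poincar\'e step, and enlarging $C_2$ if necessary, yields
\[
\int_{B_1}|Du|^p\,dx - C\int_{B_1}|u|^p\,dx + C_2 \;\ge\; C_1\!\left(\int_{B_1}|Du|^p\,dx + \int_{B_1}|u|^p\,dx\right),
\]
which is the claim.

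I expect the only genuinely delicate point to be the bookkeeping around the non-compact support: one must verify that the additive constant $C_2$ can be chosen independent of $u$ (depending only on $g$, $p$, $d$), which is where Assumption A\ref{A4} on $g \in W^{2,p}(B_1)$ is used, and that the splitting parameter $t$ can indeed be taken with $C/t < C_p$ — this is exactly the strict inequality $C < C_p$ in the hypothesis and is the structural reason the statement is false for $C = C_p$. Everything else is routine application of the triangle and Young inequalities.
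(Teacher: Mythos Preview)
The paper does not supply a proof of this lemma; it is stated in the preliminaries as an auxiliary fact and then invoked in the existence argument. So there is no ``paper's own proof'' to compare against.

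Your proposal is a correct and standard way to establish the inequality. The two structural ingredients are exactly the right ones: (i) transfer the homogeneous Poincar\'e inequality $\int_{B_1}|Dv|^p \ge C_p\int_{B_1}|v|^p$ from $v=u-g\in W_0^{1,p}(B_1)$ to $u$ via the elementary inequality $|a+b|^p \le (1+\epsilon)|a|^p + C_\epsilon|b|^p$, which yields $\int_{B_1}|Du|^p + K_\epsilon \ge \frac{C_p}{(1+\epsilon)^2}\int_{B_1}|u|^p$ with $K_\epsilon$ depending only on $\epsilon,p,\|g\|_{W^{1,p}}$; and (ii) split off a fraction $(1-t)\int_{B_1}|Du|^p$ before applying step (i), using the strict gap $C<C_p$ to pick $t\in(0,1)$ and $\epsilon>0$ with $C/t < C_p/(1+\epsilon)^2$. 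Both steps are exactly as you describe.

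Two minor remarks. First, you only need $g\in W^{1,p}(B_1)$ here, which is weaker than Assumption~A\ref{A4}; the $W^{2,p}$ regularity of $g$ is used elsewhere in the paper but not for this lemma. Second, note that with this argument $C_1$ depends on $C$ and $C_p$ (consistent with the statement) while $C_2$ necessarily also depends on $\|g\|_{W^{1,p}(B_1)}$; the lemma as stated leaves the dependence of $C_2$ unspecified, so this is fine.
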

For the proof of the Lemma \ref{poincare}, we refer the reader to \cite[Lemma 2.7]{dalmaso}.

In what follows, we include Remark 3 from \cite{Andrade-Pimentel} for completeness. 
\begin{remark}\label{remark:existence-of-minimizer}
If assumption A\ref{A1} is replaced with  A\ref{A3}, the conclusion of \cite[Proposition 2]{Andrade-Pimentel} changes. In fact, under A\ref{A3}, we have
\[
F(D^2 u)\ge \lambda \|D^2 u \|\ge 0 \, \,\,  \,  \text{for every}\, \, \,\,  u \in W^{2,p}(B_1)\cap W^{1,p}_g(B_1).
\]
 As a consequence, we obtain that the functional $J$, defined as in \eqref{functional:fn}, is weakly lower-semicontinuous over $W^{2,p}(B_1)\cap W^{1,p}_g(B_1)$. Furthermore, A\ref{A3} yields 
\[
\|D^2u_n\|_{L^p(B_1)} \le \frac{1}{\lambda^p} \, \displaystyle \int_{B_1}  [F(D^2 u_n)]^p \le C
\] 
 for every minimizing sequence $\left(u_n\right)_{n \in \mathbb{N}} \subset W^{2,p}(B_1)\cap W^{1,p}_g(B_1)$ and a positive constant $C$.
 
Therefore, under the assumptions A\ref{A2} and A\ref{A3}, there exists $u^* \in W^{2,p}(B_1)\cap W^{1,p}_g(B_1)$ such that
\[
J[u^*] \le J[u] \,  \, \, \, \text{for all} \, \, \, \,  u \in W^{2,p}(B_1)\cap W^{1,p}_g(B_1).
\]
That is, the functional $J$ admits a minimizer in the space $W^{2,p}(B_1)\cap W^{1,p}_g(B_1)$.
\end{remark}


\section{Existence results and the Euler-Lagrange equation} \label{existence_sec}
The main purpose of this section is to prove the existence of the solution to the system \eqref{eq_main}. Under the Assumptions A\ref{A1} and A\ref{A2}, we initiate our analysis by proving the existence of minimizers for \eqref{functional_main} within the class of convex functions. Let us consider $g \in W^{2,p}(B_1)$ be a convex function and define 
\[
 \mathcal{A}:= \{u \in W^{2,p}(B_1)\cap W^{1,p}_g(B_1) | \: u \: \mbox{is convex}\}
\]
to be the admissible set. Further, under the additional condition A\ref{A3}, we infer the existence of a minimizer in $W^{2,p}(B_1)\cap W^{1,p}_g(B_1)$. Moreover, we derive the Euler-Lagrange equation associated with \eqref{functional_main}, which leads to the fully nonlinear double-divergence systems in \eqref{eq_main}.
\subsection{Existence of Minimizers}

In what follows, we establish that \eqref{functional_main} has a minimizer among the functions of $\mathcal{A}.$ The proof of the following proposition is based on {\it direct method in the calculus of variations} that is a classical tool to prove the existence of a minimizer for a given functional.  Recall that $u^+= \max(u, 0)$, $u^-= \max(-u, 0)$ and $|u|:= \max(u, 0) + \max(-u, 0)$. Henceforth, we will use these notations.

\begin{proposition}[Existence of minimizers]\label{Prop:ExistenceMin}
    Assume A\ref{A1}, A\ref{A2} and A\ref{A4} hold true. Then there exists a minimizer $u^* \in \mathcal{A}$ for \eqref{functional_main}.
\end{proposition}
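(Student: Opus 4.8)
The plan is to apply the direct method in the calculus of variations to the functional $I[\cdot]$ over the admissible class $\mathcal{A}$. First I would check that $\mathcal{A}$ is nonempty: since $g$ itself is convex and lies in $W^{2,p}(B_1)$, we have $g \in \mathcal{A}$, so the infimum $I_0 := \inf_{u \in \mathcal{A}} I[u]$ is taken over a nonempty set. Next I would verify that $I_0$ is finite, in particular bounded below. The lower-order terms $\gamma_+ \max(u,0) - \gamma_- \min(u,0) = \gamma_+ u^+ + \gamma_- u^-$ can be controlled: using that $\gamma_+ \ge 0$ and $\gamma_+ + \gamma_- > 0$, and splitting into cases on the sign of $\gamma_-$, one bounds $\int_{B_1} |\gamma_+ u^+ + \gamma_- u^-|$ from below by $-C \int_{B_1} |u|$ for a constant $C$ depending only on $\gamma_\pm$. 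Combined with the coercivity from Remark \ref{Remark:Assumptions} — namely $F(D^2 u) \ge \lambda \|D^2 u\|$ on convex functions, where $D^2 u \ge 0$ — we get $[F(D^2 u)]^p \ge \lambda^p \|D^2 u\|^p$, and then Young's inequality and Poincaré's inequality (Lemma \ref{poincare}) applied to $u - g \in W_0^{1,p}(B_1)$ let us absorb the $-C\int |u|$ term and conclude $I[u] \ge c_1 \|u\|_{W^{2,p}(B_1)}^p - c_2$ for positive constants $c_1, c_2$. This gives both lower-boundedness of $I_0$ and coercivity of $I$ along minimizing sequences.

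Then I would take a minimizing sequence $(u_k) \subset \mathcal{A}$ with $I[u_k] \to I_0$. By the coercivity estimate just established, $(u_k)$ is bounded in $W^{2,p}(B_1)$; since $p > 1$, this space is reflexive, so up to a subsequence $u_k \rightharpoonup u^*$ weakly in $W^{2,p}(B_1)$. The Rellich–Kondrachov theorem gives strong convergence $u_k \to u^*$ in $W^{1,p}(B_1)$ (and in $L^p$), hence $u^* - g \in W_0^{1,p}(B_1)$ by closedness of that subspace, and $u^*$ is convex because convexity is preserved under pointwise (a.e.\ after passing to a further subsequence) limits — equivalently, the set of convex functions is weakly closed in $W^{2,p}$. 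Thus $u^* \in \mathcal{A}$. For the lower-order terms, strong $L^p$ convergence gives $\int_{B_1} \gamma_+ u_k^+ + \gamma_- u_k^- \to \int_{B_1} \gamma_+ (u^*)^+ + \gamma_- (u^*)^-$ since $t \mapsto t^\pm$ is Lipschitz.

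The remaining and most delicate point is lower semicontinuity of the principal term $u \mapsto \int_{B_1} [F(D^2 u)]^p\,dx$ under weak $W^{2,p}$ convergence. Here is where assumption A\ref{A2} (convexity of $F$) is essential: the map $M \mapsto [F(M)]^p$ is convex on the cone of nonnegative symmetric matrices. Indeed $F$ restricted to $\{M \ge 0\}$ is convex and nonnegative (Remark \ref{Remark:Assumptions}), and $t \mapsto t^p$ is convex and nondecreasing on $[0,\infty)$, so the composition is convex there; since all $D^2 u_k$ and $D^2 u^*$ are nonnegative (the functions are convex), this convexity is all we need. Then the integrand $M \mapsto [F(M)]^p$ being convex (and continuous, nonnegative) makes the functional $v \mapsto \int_{B_1} [F(v)]^p$ sequentially weakly lower semicontinuous on $L^p(B_1; \mathcal{S}(d))$ by the standard Tonelli–Serrin / De Giorgi theorem on integral functionals with convex integrands; applying this with $v_k = D^2 u_k \rightharpoonup D^2 u^*$ weakly in $L^p$ yields $\int_{B_1} [F(D^2 u^*)]^p \le \liminf_k \int_{B_1} [F(D^2 u_k)]^p$. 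Putting the three pieces together, $I[u^*] \le \liminf_k I[u_k] = I_0$, and since $u^* \in \mathcal{A}$ we conclude $I[u^*] = I_0$, i.e.\ $u^*$ is a minimizer. The main obstacle is precisely this weak lower semicontinuity step: one must be careful that convexity of $F$ is only assumed (and only needed) on the relevant cone $\{M \ge 0\}$, that $t\mapsto t^p$ interacts correctly with it, and that the classical lower semicontinuity theorem for convex integrands applies in the vector-valued $L^p$ setting with $p > 1$.
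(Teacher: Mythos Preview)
Your proposal is correct and follows the same direct-method architecture as the paper. Both arguments establish coercivity from $F(D^2u)\ge\lambda\|D^2u\|$ on convex functions together with Poincar\'e's inequality, extract a weakly $W^{2,p}$-convergent minimizing sequence, and pass to the limit term by term. The differences are only in the execution of individual steps: for the weak lower semicontinuity of the principal term $\int_{B_1}[F(D^2u)]^p$, the paper cites an external result (Proposition~3 in \cite{Andrade-Pimentel}), whereas you give a self-contained argument via the convexity of $M\mapsto[F(M)]^p$ on the cone $\{M\ge0\}$ and the classical Tonelli theorem for convex integrands; for the lower-order terms, the paper invokes Fatou's lemma after extracting an a.e.\ convergent subsequence (a liminf inequality suffices), whereas you use strong $L^p$ convergence plus Lipschitz continuity of $t\mapsto t^{\pm}$ to get actual convergence. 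Both routes are valid; yours has the advantage of being self-contained.
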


\begin{proof} 
We divide the proof into two steps. 

{\it Step 1.} First, we shall verify that the functional $I$ satisfies the coercive property. We use the fact that $u \in \mathcal{A}$ combined with the assumption A\ref{A1}, and $\gamma_+\geq 0, \gamma_-\geq 0$ such that $\gamma_+ + \gamma_->0$, we infer that
\[
\begin{array}{ccl}
I[u] &\geq &\lambda^p\|D^2u\|^p_{L^p(B_1)}+\displaystyle \int_{B_1}(\gamma_+u^+ + \gamma_-u^-)\vspace{0.25cm}\\
&\geq & \lambda^p\|D^2u\|^p_{L^p(B_1)}+ \min(\gamma_+, \gamma_-  )\displaystyle \int_{B_1}u^+ + u^-\vspace{0.25cm}\\
&\geq & \lambda^p\|D^2u\|^p_{L^p(B_1)}+ \min(\gamma_+, \gamma_-)  \displaystyle \int_{B_1}|u|\vspace{0.25cm}\\
&\geq & \lambda^p\|D^2u\|^p_{L^p(B_1)}.
\end{array}
\]
Therefore, we can conclude that $I[u]$ goes to infinity as $\|D^2u\|^p_{L^p(B_1)}$ approaches to infinity.

{\it Step 2.} In the sequel, we define
\[
\eta:=\inf_{u \in \mathcal{A}} I[u^*].
\]
Notice that $0<\eta<+\infty$ because the coercive property holds for $I[\cdot]$. Let us now consider $(u_n)_{n\in\N}\subset \mathcal{A}$ be a minimizing sequence, \emph{i.e.,} there exists $N\in\N$ such that for every $n> N$
\[
I[u_n]\le\eta+1.
\]
Hence, 
\[
\|D^2u_n\|_{L^p(B_1) }^p \leq \frac1{\lambda^p}\int_{B_1}[F(D^2u_n)]^p+\frac1{\lambda^p}\int_{B_1} \gamma_+u_n^++\gamma_-u_n^- \leq C, 
\]
where $C>0$ is a constant depends on $\lambda, p$ and $\eta$. Therefore, we can conclude that $(D^2u_n)_{n\in\N}\subset L^p(B_1)$ is uniformly bounded.

 Furthermore, invoking the Lemma \ref{poincare}, we obtain $(u_n)_{n\in\N}$ is uniformly bounded in $W^{2,p}(B_1)\cap W^{1,p}_g(B_1)$. Upon passing to a subsequence, there exists $u_{\infty}\in \mathcal{A}$ such that
\begin{equation}\label{convergencemode1}
u_n\rightharpoonup u_\infty\hspace{.2in}\mbox{in}\hspace{.1in} W^{2,p}(B_1)\cap W^{1,p}_g(B_1)
\end{equation}
and
\begin{equation}\label{convergencemode2}
u_n\to u_{\infty}\hspace{.2in}\mbox{strongly in}\hspace{.1in} L^p(B_1).
\end{equation}
Now, we can rewrite $I$ in the following way:
\begin{equation}\label{sum-of-functionals}
I[u]=J[u]+\int_{B_1}\gamma_+u^++\gamma_-u^-,
\end{equation}
where 
\begin{equation}\label{Andrade-Pimentel_functional}
J[u]: = \int_{B_1}[F(D^2u)]^p.
\end{equation}
Observe that the fact that $J$ is weakly lower semicontinuous follows from the Proposition \ref{prop:wlsc}. Hence,  we only need to verify that the inequality below holds
\begin{equation}\label{existence2}
\int_{B_1}\gamma_+u_\infty^++\gamma_-u_\infty^- \le\liminf_{n\to\infty}\int_{B_1}\gamma_+u_n^++\gamma_-u_n^-.
\end{equation}
We deduce from \eqref{convergencemode2} that $u_n$ converges to $u_{\infty}$  almost everywhere in $B_1$. Hence, we can employ Fatou's Lemma to get the desired inequality in \eqref{existence2}. Finally, we conclude that the functional $I$ is weakly lower semicontinuous and therefore
\[
I[u_{\infty}] \leq \liminf_{n\to\infty} I[u_n] = \eta
\]
Therefore, by taking $u^*:=u_{\infty}$, we complete the proof.
\end{proof}

\begin{remark}
Observe that the Proposition \ref{Prop:ExistenceMin} yields a minimizer among convex functions; there is no reason for this minimizer to be a critical point in $W^{2,p}(B_1)\cap W^{1,p}_g(B_1)$. Replacing A\ref{A1} with A\ref{A3}, we obtain the existence of minimizers in the entire space $W^{2,p}(B_1)\cap W^{1,p}_g(B_1)$. This is the content of the following proposition.
\end{remark} 

\begin{proposition}\label{Prop:Existence-of-minimizer}
Assume A\ref{A2} - A\ref{A4} hold true. Then, there exists $u^* \in W^{2,p}(B_1)\cap W^{1,p}_g(B_1)$ so that
\[
I[u^*]\leq I[u] \quad \text{for\, all} \;\;  u \in W^{2,p}(B_1)\cap W^{1,p}_g(B_1).
\]
\end{proposition}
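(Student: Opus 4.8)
The plan is to run the direct method in the calculus of variations again, this time on the full space $W^{2,p}_g(B_1)\cap W^{1,p}_g(B_1)$, with A\ref{A3} replacing A\ref{A1} so that the energy controls $\|D^2u\|_{L^p}$ even without convexity of the competitors. First I would establish coercivity: using the growth bound $\lambda\|M\|\le F(M)$ from A\ref{A3}, one gets $I[u]\ge \lambda^p\|D^2u\|_{L^p(B_1)}^p + \int_{B_1}(\gamma_+u^+ + \gamma_-u^-)$. Since $\gamma_+\ge 0$ and $\gamma_++\gamma_->0$, the linear term is bounded below (as in Step 1 of Proposition \ref{Prop:ExistenceMin}, $\int_{B_1}(\gamma_+u^++\gamma_-u^-)\ge \tfrac{\gamma_++\gamma_-}{2}\int_{B_1}|u|\ge 0$), so $I[u]\ge \lambda^p\|D^2u\|_{L^p(B_1)}^p$. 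In particular $\eta:=\inf I$ over the space is finite and nonnegative, and noting that $g$ itself is admissible (A\ref{A4}) we have $\eta\le I[g]<\infty$.

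Next I would take a minimizing sequence $(u_n)$ with $I[u_n]\le \eta+1$. Coercivity gives a uniform bound on $\|D^2u_n\|_{L^p(B_1)}$; then Lemma \ref{poincare} applied to $u_n-g\in W_0^{1,p}(B_1)$ (combined with the fixed bound on $\|g\|_{W^{2,p}}$) yields a uniform bound on $(u_n)$ in $W^{2,p}(B_1)\cap W^{1,p}_g(B_1)$. Passing to a subsequence, $u_n\rightharpoonup u_\infty$ weakly in $W^{2,p}(B_1)$ and $u_n\to u_\infty$ strongly in $L^p(B_1)$; moreover the affine condition $u-g\in W_0^{1,p}(B_1)$ is preserved under weak limits (it is a closed affine subspace), so $u_\infty\in W^{2,p}_g(B_1)\cap W^{1,p}_g(B_1)$.

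For lower semicontinuity I would split $I[u]=J[u]+\int_{B_1}(\gamma_+u^++\gamma_-u^-)$ as in \eqref{sum-of-functionals}. Weak lower semicontinuity of $J$ under weak $W^{2,p}$ convergence is exactly \cite[Proposition 3]{Andrade-Pimentel} — this is where convexity of $F$ (A\ref{A2}) together with the convexity of $t\mapsto t^p$ is used, and it is the only place convexity enters; it is the main technical input rather than a real obstacle since it is quoted. For the zero-order term, strong $L^p$ convergence gives a.e. convergence (along a further subsequence), hence $\gamma_+u_n^++\gamma_-u_n^-\to\gamma_+u_\infty^++\gamma_-u_\infty^-$ a.e.; since $\gamma_+\ge0$ and $\gamma_++\gamma_->0$ one checks the integrand $\gamma_+s^++\gamma_-s^-$ is bounded below by $-\tfrac{\gamma_++\gamma_-}{2}|s|$ — actually one wants a fixed $L^1$ lower bound, which follows because $u_n$ is bounded in $L^p\subset L^1$, so Fatou's lemma (or dominated convergence on the positive/negative parts) gives $\int_{B_1}(\gamma_+u_\infty^++\gamma_-u_\infty^-)\le\liminf_n\int_{B_1}(\gamma_+u_n^++\gamma_-u_n^-)$. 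Combining, $I[u_\infty]\le\liminf_n I[u_n]=\eta$, so $u^*:=u_\infty$ is the desired minimizer.

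The step I expect to require the most care is confirming that the limit $u_\infty$ genuinely lies in $W^{2,p}_g(B_1)\cap W^{1,p}_g(B_1)$ — i.e., that the boundary-data constraint survives the weak limit — and dealing with the sign structure of the zero-order term so that Fatou's lemma applies with a legitimate integrable lower bound; the weak lower semicontinuity of $J$ is genuinely the crux but is imported wholesale from \cite{Andrade-Pimentel}, so here it is more bookkeeping than obstacle.
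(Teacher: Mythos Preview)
Your proposal is correct and follows essentially the same route as the paper: coercivity via A\ref{A3} exactly as in Step~1 of Proposition~\ref{Prop:ExistenceMin}, then weak lower semicontinuity by splitting $I=J+\text{(zero-order term)}$, invoking \cite{Andrade-Pimentel} for $J$ and Fatou's lemma for the remaining part. The only minor discrepancy is that the paper pairs \cite[Proposition~3]{Andrade-Pimentel} with \cite[Remark~3]{Andrade-Pimentel} to cover the lower semicontinuity of $J$ once competitors are no longer assumed convex, whereas you cite only Proposition~3; the substance of the argument is otherwise identical.
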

\begin{proof}
Under A\ref{A3}, the proof of the coercivity follows the same lines as in Step $1$ in the proof of Proposition \ref{Prop:ExistenceMin}. It remains to check that $I$ is weakly lower semicontinuous in $W^{2,p}(B_1)\cap W^{1,p}_g(B_1)$. Recall that we can rewrite $I$ as in the equation \eqref{sum-of-functionals}. Hence, by using Proposition \ref{prop:wlsc} combined with Remark \ref{remark:existence-of-minimizer} and arguing as in Step $2$ in  the proof of Proposition \ref{Prop:ExistenceMin}, we obtain the desired property in $W^{2,p}(B_1)\cap W^{1,p}_g(B_1)$, which finishes the proof.
\end{proof}

\subsection{Derivation of the fully nonlinear double-divergence system}
This section is devoted to deducing the {\it Euler-Lagrange equation} associated with the functional \eqref{functional_main}.
\begin{proposition}\label{Proposition:EulerLagrange}
We assume A\ref{A2} - A\ref{A4} hold. Further, suppose $u\in W^{2,p}(B_1)\cap W^{1,p}_g(B_1)$ be a minimizer of \eqref{functional_main} and $\gamma_+$ and $\gamma_-$ be fixed nonnegative real constants such that $\gamma_++\gamma_->0$. Then, the Euler-Lagrange equation associated to the minimization problem \eqref{functional_main} is given by 
\begin{equation}\label{Euler-Lagrange}
\left([F(D^2u)]^{p-1}F_{ij}(D^2u)\right)_{x_ix_j}=\frac{\gamma_+}p\chi_{\{u>0\}}-\frac{\gamma_-}p\chi_{\{u<0\}}\hspace{.2in}\mbox{a.e. in}\hspace{.1in} B_1,
\end{equation}
in the distributional sense.
\end{proposition}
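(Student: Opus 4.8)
The plan is to compute the first variation of $I$ at the minimizer $u^*$ in admissible directions and rewrite the resulting identity in double-divergence form. First I would fix $\varphi \in \mathcal{C}^\infty_c(B_1)$ and consider the scalar function $j(t) := I[u^* + t\varphi]$; since $u^* + t\varphi \in W^{2,p}(B_1)\cap W^{1,p}_g(B_1)$ and $u^*$ is a minimizer, $j$ has a minimum at $t=0$, so $j'(0) = 0$ provided $j$ is differentiable there. The term $\int_{B_1}[F(D^2 u^* + tD^2\varphi)]^p$ is differentiable in $t$ with derivative $p\int_{B_1}[F(D^2 u^*)]^{p-1} F_{ij}(D^2 u^*)\varphi_{x_ix_j}$, using that $F\in\mathcal{C}^1$, the chain rule, and dominated convergence (the ellipticity/growth bounds on $F$ and $F_{ij}$ together with $u^*,\varphi \in W^{2,p}$ give the needed integrable majorant). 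For the lower-order term, I would write $\gamma_+\max(u,0) - \gamma_-\min(u,0) = \gamma_+ u^+ + \gamma_- u^-$ and differentiate $t \mapsto \int_{B_1} \gamma_+(u^*+t\varphi)^+ + \gamma_-(u^*+t\varphi)^-$; the pointwise derivative at $t=0$ is $\gamma_+\chi_{\{u^*>0\}}\varphi - \gamma_-\chi_{\{u^*<0\}}\varphi$ at every point where $u^*\neq 0$, and on $\{u^*=0\}$ one uses that this set contributes nothing in the limit (the difference quotient is bounded by $(\gamma_+ + |\gamma_-|)|\varphi|$, and on $\{u^*=0\}$ the quantity $(t\varphi)^{\pm}/t$ need not converge, but $\chi_{\{u^*=0\}}$ has a contribution that vanishes — more carefully, one shows the set where the difference quotient fails to converge is $\{u^*=0\}\cap\{\varphi\neq 0\}$, and there $|(u^*+t\varphi)^+ - (u^*)^+|/|t| \le |\varphi|$, so one splits the integral and handles $\{u^*=0\}$ by noting $\int_{\{u^*=0\}}$ of the difference quotient times... ). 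Actually the clean route is: the function $s\mapsto s^+$ is convex and Lipschitz, so $\max(s,0)$ is differentiable a.e. with derivative $\chi_{\{s>0\}}$, and by dominated convergence $\frac{d}{dt}\big|_{t=0}\int \gamma_+(u^*+t\varphi)^+ = \int \gamma_+\chi_{\{u^*>0\}}\varphi$ holds because the difference quotient converges a.e. on $\{u^*\neq 0\}$ and is dominated, while the contribution of $\{u^*=0\}$ to $\liminf$ and $\limsup$ of the difference quotient, being squeezed between $\pm\gamma_+\int_{\{u^*=0\}}|\varphi|$, forces us instead to use one-sided derivatives and the minimality of $t=0$ from both sides to pin down the value — giving $\int_{B_1}\gamma_+\chi_{\{u^*>0\}}\varphi - \gamma_-\chi_{\{u^*<0\}}\varphi$ on the nose.

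Combining, minimality yields, for every $\varphi\in\mathcal{C}^\infty_c(B_1)$,
\[
p\int_{B_1}[F(D^2 u^*)]^{p-1} F_{ij}(D^2 u^*)\,\varphi_{x_ix_j}\,dx + \int_{B_1}\big(\gamma_+\chi_{\{u^*>0\}} - \gamma_-\chi_{\{u^*<0\}}\big)\varphi\,dx = 0,
\]
which is precisely the distributional formulation of \eqref{Euler-Lagrange}, i.e.\ $\left([F(D^2u^*)]^{p-1}F_{ij}(D^2u^*)\right)_{x_ix_j} = \frac{\gamma_+}{p}\chi_{\{u^*>0\}} - \frac{\gamma_-}{p}\chi_{\{u^*<0\}}$ in $B_1$. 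Since $[F(D^2u^*)]^{p-1}F_{ij}(D^2u^*)\in L^1_{loc}(B_1)$ (by the growth bound on $F$, $u^*\in W^{2,p}$, and boundedness of $F_{ij}$), this is an identity of distributions, and as the right-hand side is an $L^\infty$ function the equation holds a.e.\ in $B_1$ in the double-divergence sense.

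The main obstacle I anticipate is justifying the differentiation of the lower-order term at points where $u^* = 0$: the map $t\mapsto (u^*+t\varphi)^+$ is not differentiable there, so one cannot simply pass the derivative inside the integral. The resolution is to exploit that $t=0$ is a genuine minimum of $j(t)$, compute the one-sided derivatives $j'(0^+)\ge 0\ge j'(0^-)$ — each of which exists because the monotone/convex structure of $s\mapsto s^+$ guarantees one-sided differentiability of the integrand and dominated convergence applies to the (bounded) difference quotients — and observe that on $\{u^*=0\}$ the contributions to $j'(0^+)$ and $j'(0^-)$ are $\pm\gamma_+\int_{\{u^*=0\}}\varphi^+ \mp \gamma_-\int_{\{u^*=0\}}\varphi^-$ type terms that, replacing $\varphi$ by $-\varphi$ and combining the two inequalities, must vanish, leaving exactly the claimed $\chi_{\{u^*>0\}}$, $\chi_{\{u^*<0\}}$ expression. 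A secondary point requiring care is confirming the integrability needed for dominated convergence in the $F$-term: this follows from $|F(M)|\le \Lambda\|M\|$, the bound $|F_{ij}|\le \Lambda$ from \eqref{Derivative_F}, Hölder's inequality with exponents $p$ and $p/(p-1)$, and $D^2u^*, D^2\varphi \in L^p_{loc}$.
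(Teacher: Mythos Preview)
Your overall strategy---compute the first variation of $I$ at the minimizer, handle the $J$-part by the chain rule and dominated convergence, and treat the nonsmooth lower-order piece through one-sided derivatives and the minimality inequality---is the same as the paper's. The paper differs only in the mechanics of the lower-order computation: rather than invoking one-sided differentiability abstractly, it expands $(u^*+t\varphi)^\pm-(u^*)^\pm$ explicitly via characteristic functions, isolates a residual term $\tfrac{1}{t}\int_{B_1} u^*\,h$ (where $h$ is a signed combination of indicators of the strips $\{-t|\varphi|\le u^*\le 0\}$ and $\{0\le u^*\le t|\varphi|\}$), shows this residual is nonpositive and discards it, and only then lets $t\to 0^+$ and replaces $\varphi$ by $-\varphi$ to close the argument.

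There is, however, a concrete gap in your treatment of $\{u^*=0\}$. You claim that the contributions from $\{u^*=0\}$ to $j'(0^+)$ and $j'(0^-)$ ``must vanish'' once one replaces $\varphi$ by $-\varphi$ and combines the two inequalities. They do not. Writing $L(\varphi)$ for the linear part $p\int[F(D^2u^*)]^{p-1}F_{ij}(D^2u^*)\varphi_{x_ix_j}+\int\varphi(\gamma_+\chi_{\{u^*>0\}}-\gamma_-\chi_{\{u^*<0\}})$, the two one-sided conditions give
\[
-\int_{\{u^*=0\}}\bigl(\gamma_+\varphi^+ + \gamma_-\varphi^-\bigr)\ \le\ L(\varphi)\ \le\ \int_{\{u^*=0\}}\bigl(\gamma_+\varphi^- + \gamma_-\varphi^+\bigr),
\]
and since $(-\varphi)^{\pm}=\varphi^{\mp}$ while $L(-\varphi)=-L(\varphi)$, the substitution $\varphi\mapsto-\varphi$ returns \emph{exactly the same pair of inequalities}. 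You therefore obtain at best $|L(\varphi)|\le C\int_{\{u^*=0\}}|\varphi|$, i.e.\ the defect is a bounded distribution supported on $\{u^*=0\}$, not $L(\varphi)=0$. (The paper's limiting step meets the same obstruction---the pointwise limit of $\chi_{\{u^*>-t\varphi\}}$ on $\{u^*=0\}$ is $\chi_{\{\varphi>0\}}$, not $0$---and simply passes over it; but your write-up should not assert a cancellation mechanism that does not actually cancel.)
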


\begin{proof}
First, we write
\[
u^+:=u\chi_{\{u>0\}}\hspace{0.2in}\mbox{and}\hspace{0.2in}u^-:=-u\chi_{\{u<0\}}.
\]
With the use of A\ref{A3} and $0<\beta:= \gamma_+ +\gamma_-$, we have
\[
|[F(D^2u)]^{p} + \gamma_+ u^+ + \gamma_- u^- | \leq \Lambda^p \| D^2u\|^p + \beta|u|,
\]
It follows from the inequality above that $I[u^*+ t \varphi]$ is well-defined, for every $\varphi\in \mathcal{C}_c^{\infty}(B_1)$ and for every $t \in \mathbb{R}$, and by the minimality of $u^*$, we have
\[
 I[u^*] \leq I[u^*+ t \varphi] \: \: \text{for every}\: \: \varphi\in \mathcal{C}_c^{\infty}(B_1).
\]
Hence, we use the definition of $J$ in \eqref{Andrade-Pimentel_functional} to rewrite the inequality above as follows:
\begin{align*}
0 &\leq \frac{J[u^*+ t \varphi]-J[u^*]}{t}+ \gamma_+\int_{B_1} \frac{(u^*+t\varphi)^{+}-({u^*})^{+}}{t}  \: dx +\gamma_- \int_{B_1} \frac{(u^*+t\varphi)^{-}-({u^*})^{-}}{t}  \: dx\\
& = \mathtt{A} + \gamma_+ \mathtt{B} + \gamma_- \mathtt{C}.
\end{align*}
Next, we estimate the value of $\mathtt{C}$.
\[
{\mathtt C} = \frac{1}{t} \int_{B_1} (u^*+t\varphi)^{-}- ({u^*})^{-} \: dx = \frac{1}{t} \int_{B_1 \cap \{u^*+t\varphi<0\}} -(u^*+t\varphi)  \: dx - \frac{1}{t} \int_{B_1 \cap \{u^*<0\}} - u^*  \: dx.
\]
Note that
\[
\{u^*<0\} \cap B_1 = \left(\{ u^* + t \varphi<0 \} \cup \{ -t\varphi\le u^* < 0\} \backslash \{0 \le u^* <-t \varphi \}\right) \cap B_1,
\]
then, we obtain the estimate of $\mathtt{C}$ as follows:
\begin{align*}
{\mathtt C} &= \frac{1}{t} \int_{B_1 \cap \{u^*+t\varphi<0\}} -(u^*
+t\varphi)  \: dx + \frac{1}{t} \int_{B_1 \cap \{ u^* + t \varphi<0 \}}u^*  \: dx + \frac{1}{t} \int_{B_1 \cap \{ -t\varphi\le u^* < 0\}}u^*  \: dx - \frac{1}{t} \int_{B_1 \cap \{0 \le u^* <-t \varphi \}}u^*  \: dx \\
& =  \int_{B_1 \cap \{u^*+t\varphi<0\}} - \varphi  \: dx + \frac{1}{t} \int_{B_1 \cap \{ -t\varphi\le u^* < 0\}}u^*  \: dx - \frac{1}{t} \int_{B_1 \cap \{0 \le u^* <-t \varphi \}}u^*  \: dx \\
& \le \int_{B_1 \cap \{u^*+t\varphi<0\}} - \varphi \: \end{align*}
 A similar reasoning yields an estimate of the value of $\mathtt{B}$ 
\[
\mathtt{B} \le \int_{B_1 \cap \{u^*+t\varphi>0\}}  \varphi \: dx.
\]
Finally, we can conclude that
\[
 \frac{J[u^*+ t \varphi]-J[u^*]}{t}+ \gamma_+ \int_{B_1 \cap \{u^*+t\varphi>0\}}  \varphi \: dx - \gamma_- \int_{B_1 \cap \{u^*+t\varphi<0\}} \varphi \: dx \ge 0.
\]

By employing the limit as $t$ goes $0$, we have that the first term on the left-hand side is given by
\[
\lim_{t\to 0}\frac{J[u^*+t\varphi]-J[u^*]}{t}=\int_{B_1}\left(p[F(D^2u^*)]^{p-1}F_{ij}(D^2u^*)\right)_{x_ix_j}\varphi.
\]
Observe that the integral above is well-defined: indeed, by integrating by parts twice and using the fact that $F$ and $F_{ij}$ are $(\lambda, \Lambda)$-uniformly elliptic operators together with  $u \in W^{2, p}(B_1)$, we obtain 
\begin{equation} \label{L1-function}
  \left([F(D^2u^*)]^{p-1}F_{ij}(D^2u^*)\right)_{x_ix_j}\in L^1_{\mathrm{loc}}(B_1).  
\end{equation}
Note that the equality above follows from \cite[Theorem 1]{Andrade-Pimentel}. Therefore, for all $\varphi \in \mathcal{C}^{\infty}_c$, we have
\[
\int_{B_1}\left(p[F(D^2u^*)]^{p-1}F_{ij}(D^2u^*)\right)_{x_ix_j}\varphi + \gamma_+ \int_{B_1 \cap \{u^*>0\}}  \varphi \: dx - \gamma_- \int_{B_1 \cap \{u^*<0\}} \varphi \: dx \ge 0.
\]
Combining \eqref{L1-function} with a common density argument, we can show that 
\[
\varphi\mapsto\int_{B_1}\varphi\left(\left( p[F(D^2u^*)]^{p-1}F_{ij}(D^2u^*)\right)_{x_ix_j}+\gamma_+\chi_{\{u^*>0\}}-\gamma_-\chi_{\{u^*<0\}}\right)
\] 
is a well-defined and non-negative distribution. Since 
\[
\int_{B_1}\varphi\left(\left(p[F(D^2u^*)]^{p-1}F_{ij}(D^2u^*)\right)_{x_ix_j}+\gamma_+\chi_{\{u^*>0\}}-\gamma_-\chi_{\{u^*<0\}}\right) \geq 0,
\]
for all $\varphi \in \mathcal{C}_c^{\infty}(B_1)$. In particular, it also holds for $\varphi= -\phi$, with $\phi \in \mathcal{C}_c^{\infty}(B_1)$. Hence,
\[
\int_{B_1}\phi\left(\left(p[F(D^2u^*)]^{p-1}F_{ij}(D^2u^*)\right)_{x_ix_j}+\gamma_+\chi_{\{u^*>0\}}-\gamma_-\chi_{\{u^*<0\}}\right) \leq 0.
\]
Therefore
\begin{equation*}\label{p-fullynonlinear}
  \left([F(D^2u^*)]^{p-1}F_{ij}(D^2u^*)\right)_{x_ix_j}=\frac{\gamma_+}p\chi_{\{u^*>0\}}-\frac{\gamma_-}p\chi_{\{u^*<0\}}\hspace{.2in}\hspace{.1in}\mbox{a.e. in}\hspace{.1in} B_1,  
\end{equation*}
in the distributional sense, which concludes the proof.
\end{proof}
At this point, we choose $m:= [F(D^2u)]^{p-1}$ and obtain the fully nonlinear double-divergence system as in \eqref{eq_main}.


\subsection{Existence of solutions}
In this section, we show that the existence of minimizers for \eqref{functional_main} guarantees that there exists a pair $(u^*, m^*)$ solving the fully nonlinear double-divergence system in \eqref{eq_main}. In order to prove the existence of solutions to \eqref{eq_main}, we have to show that there exists a $L^p$-viscosity solution  $u \in \mathcal{C}(B_1)$ satisfying 
\begin{equation*}\label{eq.1}
F(D^2u)= m^{\frac{1}{p-1}} \quad \text{in} \quad B_1,
\end{equation*}
and there exists a weak solution $0\leq m \in L^1(B_1)$  satisfying 
\begin{equation*}\label{eq.2}
		\int_{B_1}\left(F_{ij}(D^2u)\,m\right)\varphi_{x_ix_j}\,\,=\,\int_{B_1} \varphi \left(\gamma_+\chi_{\{u>0\}}- \gamma_-\chi_{\{u<0\}}\right),
\end{equation*}
	for every $\varphi\in\mathcal{C}^\infty_c(B_1)$.

\begin{theorem}\label{Theorem1}
    Suppose A\ref{A2} - A\ref{A4} hold. Then, there exists a solution $(u^*, m^*)$ to the fully nonlinear double-divergence system \eqref{eq_main}, where $u^* \in W^{2,p}(B_1)\cap W^{1,p}_g(B_1)$  is a minimizer to the functional $I$ in \eqref{functional_main}.
\end{theorem}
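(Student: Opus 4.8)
The plan is to combine the existence of a minimizer $u^* \in W^{2,p}_g(B_1)\cap W^{1,p}_g(B_1)$ from Proposition \ref{Prop:Existence-of-minimizer} with the Euler--Lagrange equation from Proposition \ref{Proposition:EulerLagrange}, and then show that the pair $(u^*, m^*)$ with $m^* := [F(D^2 u^*)]^{p-1}$ fulfills each of the three items in Definition \ref{weak_solution}. The three things to check are: that $u^*$ is continuous and $m^* \in L^1(B_1)$ with $m^* \geq 0$; that $u^*$ is an $L^p$-viscosity solution of $F(D^2 u^*) = (m^*)^{1/(p-1)}$; and that $m^*$ solves the double-divergence equation in the weak sense with right-hand side $\frac{\gamma_+}{p}\chi_{\{u^*>0\}} - \frac{\gamma_-}{p}\chi_{\{u^*<0\}}$.

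First I would handle item (i). Nonnegativity of $m^*$ is immediate from Remark \ref{Remark:Assumptions} (or A\ref{A3}), since $F(N) \geq \lambda\|N\| \geq 0$; and $m^* \in L^1(B_1)$ follows because $u^* \in W^{2,p}(B_1)$ gives $F(D^2 u^*) \in L^p(B_1) \subset L^{p-1}(B_1)$ by the growth bound $F(D^2 u^*) \leq \Lambda \|D^2 u^*\|$, whence $[F(D^2 u^*)]^{p-1} \in L^{p/(p-1)}(B_1) \subset L^1(B_1)$ on the bounded domain $B_1$. Continuity of $u^*$: since $p > d/2$, Sobolev embedding gives $W^{2,p}_{loc}(B_1) \hookrightarrow \mathcal{C}(B_1)$ (indeed into $\mathcal{C}^{0,\alpha}_{loc}$), so $u^* \in \mathcal{C}(B_1)$ up to choosing the continuous representative. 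For item (iii), this is exactly the content of the Euler--Lagrange identity \eqref{Euler-Lagrange}: with $m^* = [F(D^2 u^*)]^{p-1}$ one has $[F(D^2 u^*)]^{p-1} F_{ij}(D^2 u^*) = F_{ij}(D^2 u^*)\, m^*$, and the distributional identity proved in Proposition \ref{Proposition:EulerLagrange} for test functions in $W^{1,2}_0(B_1) \cap L^\infty(B_1)$ holds a fortiori for $\varphi \in \mathcal{C}^\infty_c(B_1)$, which is the class required in Definition \ref{weak_solution}; after multiplying through by $p$ and relabeling, the right-hand side becomes $\gamma_+\chi_{\{u^*>0\}} - \gamma_-\chi_{\{u^*<0\}}$ up to the constant, matching item (iii).

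The main obstacle is item (ii): one must verify that the \emph{minimizer} $u^*$ is an $L^p$-\emph{viscosity} solution of $F(D^2 u^*) = (m^*)^{1/(p-1)} = [F(D^2 u^*)^{p-1}]^{1/(p-1)} = F(D^2 u^*)$. The equation is tautological pointwise a.e.\ by the choice of $m^*$; the real issue is upgrading an a.e.\ (strong $W^{2,p}_{loc}$) solution to an $L^p$-viscosity solution. Here I would invoke the standard equivalence between strong $W^{2,p}_{loc}$ solutions and $L^p$-viscosity solutions for uniformly elliptic $F$ with $p > d/2$, as developed in \cite{CafCraKocSwi96} (and used throughout \cite{Andrade-Pimentel}): a function in $W^{2,p}_{loc}$ satisfying the equation a.e.\ is automatically an $L^p$-viscosity solution, since for any $\psi \in W^{2,p}_{loc}$ with $F(D^2\psi) - f \geq \varepsilon$ a.e.\ on an open set, the difference $u^* - \psi$ cannot attain an interior local minimum --- this follows from the ABP/maximum principle comparison applied to the equation satisfied a.e.\ by $u^* - \psi$. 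I would state this as a citation to \cite{CafCraKocSwi96} rather than reprove it.

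Finally I would assemble the pieces: set $u^* $ to be the minimizer given by Proposition \ref{Prop:Existence-of-minimizer} (valid under A\ref{A2}--A\ref{A4}, which are the hypotheses of the theorem), define $m^* := [F(D^2 u^*)]^{p-1} \geq 0$, observe $m^* \in L^1(B_1)$ and $u^* \in \mathcal{C}(B_1)$, invoke the $L^p$-viscosity equivalence for item (ii), and invoke Proposition \ref{Proposition:EulerLagrange} for item (iii). This shows $(u^*, m^*)$ is a weak solution of \eqref{eq_main} in the sense of Definition \ref{weak_solution}, completing the proof. A small remark: the statement labels \eqref{eq_main} a ``mean-field game system,'' but no adjoint/MFG structure is needed here --- only the double-divergence form --- so the argument is unaffected.
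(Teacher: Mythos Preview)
Your proposal is correct and follows essentially the same route as the paper: obtain $u^*$ from Proposition~\ref{Prop:Existence-of-minimizer}, set $m^*:=[F(D^2u^*)]^{p-1}$, verify $m^*\ge 0$ and $m^*\in L^1(B_1)$ from A\ref{A3}, read off item~(iii) from Proposition~\ref{Proposition:EulerLagrange}, and upgrade the a.e.\ identity $F(D^2u^*)=(m^*)^{1/(p-1)}$ to an $L^p$-viscosity solution via \cite{CafCraKocSwi96}. Your explicit use of the Sobolev embedding $W^{2,p}_{\mathrm{loc}}\hookrightarrow \mathcal{C}$ (for $p>d/2$) to secure $u^*\in\mathcal{C}(B_1)$ is a detail the paper leaves implicit; note only that the nonnegativity of $m^*$ genuinely requires A\ref{A3} (Remark~\ref{Remark:Assumptions} alone applies only to nonnegative matrices), which you correctly cite.
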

\begin{proof}

Applying Proposition \ref{Prop:Existence-of-minimizer}, we have  $u^*\in W^{2,p}(B_1)\cap W^{1,p}_g(B_1)$ is a minimizer for \eqref{functional_main}, which implies that there exists a negligible set $\mathcal{N}\subset B_1$ such that $D^2u^*(x)$ is well-defined for every $x\in B_1\setminus\mathcal{N}$. Hence, using A\ref{A3} again, we get that
\begin{equation}\label{almosteverywhere}
F(D^2u^*(x))\ge 0 \; \; \mbox{for a.e.}\; \;x\in B_1,
\end{equation}
that is, $u^*$ is a strong supersolution to $F(D^2 u^*) = 0$. Thus, in virtue of \cite[Lemma 2.6 ]{CafCraKocSwi96}, $u^*$ satisfies
\begin{equation} \label{Lpviscosity}
F(D^2u^*)\ge 0 \: \; \text{in the $L^p$-viscosity sense}.
\end{equation}

On the other hand, it follows from Proposition \ref{Proposition:EulerLagrange} that
\begin{equation}\label{Eq1:existencesols}
    \int_{B_1}\left([F(D^2u^*)]^{p-1}F_{ij}(D^2u^*)\right)\varphi_{x_ix_j}=\int_{B_1}\varphi\left(\frac{\gamma_+}{p}\chi_{\{u^*>0\}}-\frac{\gamma_-}{p}\chi_{\{u^*<0\}}\right)\: \: \text{ for all} \: \: \varphi\in \mathcal{C}^\infty_c(B_1).
\end{equation}

Set $m^* := [F(D^2u^*)]^{p-1}$. In view of \eqref{almosteverywhere} and \eqref{Lpviscosity}, we have that $m^*(x)$ is well-defined and satisfies $m^*(x)\ge0$ for almost everywhere $x\in B_1$. In addition, from A\ref{A3}, we have $\int_{B_1}m ^*\le C$, for some constant $C>0$, that is, $m^*\in L^1_{\mathrm{loc}}(B_1)$. Therefore, 
\[
\int_{B_1}\left(F_{ij}(D^2u^*)m^*\right)\varphi_{x_ix_j} =\int_{B_1}\varphi\left(\frac{\gamma_+}{p}\chi_{\{u^*>0\}}-\frac{\gamma_-}{p}\chi_{\{u^*<0\}}\right) \: \: \text{for every} \: \: \varphi\in\mathcal{C}^\infty_c(B_1).
\]
This means that $m^*$ is a weak solution to the system's second equation of the system \eqref{eq_main}. Finally, we claim that $u^*\in W^{2,p}(B_1)\cap W^{1,p}_g(B_1)$ is an $L^p$-viscosity solution to the first equation of \eqref{eq_main}. More precisely, $u^*$ solves 
\[
F(D^2u^*)={(m^*)}^{1/p-1} \: \: \text{in} \: \: B_1,
\]
in the $L^p$-viscosity sense. Since $F(D^2u^*(x))={(m^*)}^{1/p-1} (x)$ almost every $x\in B_1$, we can conclude that $u^*$ is a strong solution. As before, an application of \cite[Lemma 2.6 ]{CafCraKocSwi96} yields the claim.
\end{proof}
\begin{remark}
 Our results can be generalize by replacing $V_0(u)= \gamma_+ \max(u, 0) - \gamma_- \min(u, 0)$ by a Lipschitz potential $V (u)$ bounded from below. 
\end{remark}


\section{Improved regularity for the solutions of the system}\label{regularity_sec}
This section is devoted to the results related to gains of integrability and improved regularity for the system \eqref{eq_main}. As usual, constants standing for $C$ may change from line to line, and depend only on the suitable quantities. In addition, we will call a universal constant when a constant depends only on the ellipticity constants $\lambda, \Lambda$, and the dimension $d$.  First, we establish an interior estimate for the nonnegative supersolutions of the second equation in \eqref{eq_main}.

\begin{lemma}\label{lemma_int}
Let $m \in L^1(B_1)$ be a nonnegative weak solution to
	\begin{equation}\label{second_ineq}
	\left(F_{ij}(D^2u)\,m \right)_{x_ix_j} \leq \tilde{f } \hspace{.2in}\mbox{in}\hspace{.1in} B_2.
	\end{equation}
Assume A\ref{A1} holds and $\tilde{f}$ is a bounded function in $B_2$, we have the estimate
	\[
	\|m\|_{L^1(B_1)}\leq C\| m\|_{L^1(B_{1/2})} + C\|\tilde{f} \|_{L^{\infty}(B_2)},
	\]
the constant $C$ depending only on the ellipticity constants and $d$.
\end{lemma}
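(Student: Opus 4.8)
The plan is to compare $m$ to a suitable barrier built from the adjoint (double-divergence) operator. Since $m$ is a nonnegative weak solution of $(F_{ij}(D^2u)\,m)_{x_ix_j}\le\tilde f$, and the coefficients $a_{ij}(x):=F_{ij}(D^2u(x))$ are measurable and $(\lambda,\Lambda)$-elliptic by \eqref{Derivative_F}, I would test the inequality against a fixed nonnegative cutoff-type function $\varphi\in\mathcal{C}^\infty_c(B_1)$ chosen once and for all so that $\varphi\equiv 1$ on $B_{1/2}$, $0\le\varphi\le 1$, $\supp\varphi\subset B_1$. Then
\[
\int_{B_1}\big(F_{ij}(D^2u)\,m\big)\varphi_{x_ix_j}\;=\;\int_{B_1} m\, a_{ij}\varphi_{x_ix_j}\;\le\;\int_{B_1}\tilde f\,\varphi\;\le\;C\|\tilde f\|_{L^\infty(B_1)},
\]
using $m\ge 0$ and that testing the weak inequality against a nonnegative $\varphi$ preserves the sign. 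The goal is thus to bound $\|m\|_{L^1(B_1)}$ from above by $\|m\|_{L^1(B_{1/2})}$ plus the left-hand side, i.e. to produce a test function $\psi\ge 0$ with $a_{ij}\psi_{x_ix_j}\le -1$ on $B_1\setminus B_{1/2}$ (so that $-\int m\,a_{ij}\psi_{x_ix_j}\ge \int_{B_1\setminus B_{1/2}} m$) while keeping $\psi$ and $\int m\,a_{ij}\psi_{x_ix_j}$ controlled over $B_{1/2}$ by $\|m\|_{L^1(B_{1/2})}$.

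**Key steps.** First I would recall the maximum principle / barrier for operators in double-divergence form: for the adjoint operator $L^*w = (a_{ij}w)_{x_ix_j}$, there is a classical duality with $Lv = a_{ij}v_{x_ix_j}$, and one can integrate by parts twice against a smooth $v$. So I would instead pick $v\in\mathcal{C}^\infty(\overline{B_1})$, nonnegative, with $v\equiv 1$ on a neighborhood of $\partial B_1$ and $\mathcal{M}^-_{\lambda,\Lambda}(D^2v)\ge 1$ on $B_1\setminus B_{1/2}$ — concretely a radial function like $v(x)=c(1-e^{-\mu|x|^2})$ or a polynomial in $|x|$, with constants $c,\mu$ chosen depending only on $\lambda,\Lambda,d$ so that $a_{ij}v_{x_ix_j}\ge \mathcal{M}^-_{\lambda,\Lambda}(D^2 v)\ge 1$ on the annulus. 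Then, since $m$ is a weak solution, $\int m\, a_{ij}v_{x_ix_j}\le\int \tilde f\, v\le C\|\tilde f\|_{L^\infty}$; splitting the left side over $B_{1/2}$ and $B_1\setminus B_{1/2}$, on the annulus $a_{ij}v_{x_ix_j}\ge 1$ gives $\int_{B_1\setminus B_{1/2}} m\le C\|\tilde f\|_{L^\infty} + \big|\int_{B_{1/2}} m\,a_{ij}v_{x_ix_j}\big|\le C\|\tilde f\|_{L^\infty}+\|v_{x_ix_j}\|_{L^\infty}\Lambda\|m\|_{L^1(B_{1/2})}$; absorbing constants and adding $\|m\|_{L^1(B_{1/2})}$ to both sides yields the claim. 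One subtlety: the test function $v$ does not have compact support, so strictly one argues by approximation (multiply by a cutoff that is $1$ on $B_{1-\delta}$, send $\delta\to0$, using $m\in L^1$ and the structure of the weak formulation) — this replaces condition (iii) of Definition \ref{weak_solution} by its natural extension to $\mathcal{C}^2_c$, then to $W^{2,\infty}$-type test functions vanishing near $\partial B_1$.

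**Main obstacle.** The delicate point is the non-compact support of the barrier: Definition \ref{weak_solution}(iii) only allows $\varphi\in\mathcal{C}^\infty_c(B_1)$, so I cannot directly test against a function equal to $1$ near $\partial B_1$. I would handle this by working on an exhausting family $B_{1-\delta}$, choosing barriers $v_\delta$ adapted to the annulus $B_{1-\delta}\setminus B_{1/2}$ with constants uniform in $\delta$, obtaining $\|m\|_{L^1(B_{1-\delta})}\le\|m\|_{L^1(B_{1/2})}+C\|\tilde f\|_{L^\infty(B_1)}$, and then letting $\delta\to 0$ via monotone convergence. The only other thing to check is that $a_{ij}=F_{ij}(D^2u)$ is merely measurable, but the barrier estimate $a_{ij}v_{x_ix_j}\ge\mathcal{M}^-_{\lambda,\Lambda}(D^2v)$ is pointwise a.e. and requires no regularity of the coefficients beyond ellipticity, so the argument goes through. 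I expect that producing the explicit radial barrier with the Pucci bound and carrying out the $\delta\to 0$ approximation cleanly is the bulk of the work; everything else is a two-line integration-by-parts.
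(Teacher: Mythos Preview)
Your approach is essentially the same as the paper's: both test the weak inequality against a nonnegative radial barrier whose Pucci-minimal operator $\mathcal{M}^-_{\lambda,\Lambda}(D^2 v)$ is bounded below by a positive constant on an outer annulus, then split the integral over the annulus and the inner ball. The paper's only extra input is the explicit choice $h(x)=\big[(1+\delta)^2-|x|^2\big]^2$, which vanishes to second order at $|x|=1+\delta$ (thereby addressing, at least formally, the compact-support issue you flagged), yields $\int_{B_1\setminus B_{1-\delta}} m \le C\|\tilde f\|_{L^\infty}$ on a thin annulus, and then iterates down to $B_{1/2}$; your direct choice of a barrier covering the whole annulus $B_1\setminus B_{1/2}$ simply avoids that iteration.
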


\begin{proof}
Given $\delta>0$, we define the function $h(x):= [(1+ \delta)^2 - |x|^2]^2.$ Consider $B_{(1-\delta)} \subset B_{1} \subset B_{(1+\delta)}$ concentric at the origin. Computing the second derivatives of $h$ combined with \eqref{Derivative_F}, we have that 
\[
 F_{ij}(D^2u)h_{x_ix_j} \ge 8\lambda|x|^2-4d\Lambda[(1+\delta)^2-|x|^2].
\]
 Hence, for $\delta$ small enough, we can conclude that
\begin{align*}
F_{ij}(D^2u)h_{x_ix_j}\ge0\hspace{.2in}&\mbox{for}\hspace{.1in}1-\delta\le|x|<1+\delta\\
F_{ij}(D^2u)h_{x_ix_j}\ge C\hspace{.2in}&\mbox{for}\hspace{.1in}1-\delta\le|x|\le1
\end{align*}
and
\[
\left|F_{ij}(D^2u)h_{x_ix_j}\right|\le C,
\]
where $C$ is a universal constant. Hence
	\begin{equation*}
	 \int_{B_1\setminus B_{(1-\delta)}}m \leq\, C\int_{B_1\setminus B_{(1-\delta)}} m\, F_{ij}(D^2 u)h_{x_i x_j}\hspace{0.2in}
	\end{equation*}	
Since $B_1\backslash B_{(1-\delta)} \subset B_{(1 + \delta)}\backslash B_{(1-\delta)}$, we have $|B_1\backslash B_{(1-\delta)}| \leq |B_{(1 + \delta)}\backslash B_{(1-\delta)}|$. Hence 
	\[
	 \int_{B_1\backslash B_{(1-\delta)}}m \leq\, C\int_{B_{(1+ \delta)} \backslash  B_{(1-\delta)}} m\,F_{ij}(D^2 u)h_{x_i x_j}.
	\]

On the other hand, $m$ solves
\begin{equation*}
	\left(F_{ij}(D^2u)\,m \right)_{x_ix_j} \leq \tilde{f } \hspace{.2in}\mbox{in}\hspace{.1in} B_2.
\end{equation*}
 Since $h $ is regular enough and nonnegative, we obtain from the previous inequality
\begin{equation}\label{ineq:weak-solution}
	\left(F_{ij}(D^2u)\,m \right)_{x_ix_j}\, h \leq \tilde{f }\, h \hspace{.2in}\mbox{in}\hspace{.1in} B_2.
\end{equation}
By integrating inequality \eqref{ineq:weak-solution} over $B_{(1+ \delta)}$ and applying integration by parts, we obtain
\[
\int_{B_{(1+ \delta)}} m\, F_{ij}(D^2 u)h_{x_i x_j} \,\leq\, \int_{B_{(1+ \delta)} } \tilde{f} h,
\]
hence, 
\[
\int_{B_{(1+ \delta)} \backslash  B_{(1-\delta)}} m\, F_{ij}(D^2 u)h_{x_i x_j} \,\leq\, {C}\| \tilde{f}\|_{L^{\infty}(B_2)} - \int_{B_{(1- \delta)}} m\, F_{ij}(D^2 u)h_{x_i x_j},
\]
which implies that
	\[
\int_{B_{(1+ \delta)} \backslash  B_{(1-\delta)}} m\, F_{ij}(D^2 u)h_{x_i x_j} \,\leq\, \int_{B_{(1- \delta)}} m\, |F_{ij}(D^2 u)h_{x_i x_j}| + {C}\| \tilde{f}\|_{L^{\infty}(B_2)},
\]
\emph{i. e.},
\[
\int_{B_{(1+ \delta)} \backslash  B_{(1-\delta)}} m\, F_{ij}(D^2 u)h_{x_i x_j} \,\leq\, C\int_{B_{(1- \delta)}} m + {C}\| \tilde{f}\|_{L^{\infty}(B_2)},
\]
thus
	\[
 \int_{B_1}m\, \leq \, C\int_{B_{(1-\delta)}} m +  C\| \tilde{f}\|_{L^{\infty}(B_2)}.
	\]
Using an iteration argument, we obtain
	\[
	\int_{B_1}m\, \leq \, C\int_{B_{1/2}} m + C \| \tilde{f}\|_{L^{\infty}(B_2)}.
	\]
 This finishes the proof.
\end{proof}
Before we continue to simplify the calculations, we introduce an operator $\mathcal{L}$ that is given by
	\begin{equation}\label{op:L}
	\mathcal{L}v:= F_{ij}(D^2 u){v}_{x_i x_j},
	\end{equation}
 where $F_{ij}(M)$ denotes the first derivative of $F(M)$ with respect to the entry $m_{ij}$ of the matrix $M=(m_{ij})_{i,j=1}^d$ and satisfies \eqref{Derivative_F}.
 
\begin{lemma}\label{Gain-of-Integrability}
Let $m \in L^1(B_{1/2})$ be a non-negative weak solution to 
	\begin{equation} \label{eq:weak_solution_m}
	\left(F_{ij}(D^2u)\,m \right)_{x_ix_j} = \tilde{f } \;\;\;\;\mbox{in}\;\;\;\; B_1.
	\end{equation}
Suppose A\ref{A1} holds true and $\tilde{f}$ is a bounded function in $B_1$. Then in fact $m \in L^{d/{d-1}}(B_1)$ and we obtain the estimate
 	\begin{equation}\label{est:gain-of-integrability}
 	\|m\|_{L^{d/{d-1}}(B_{1/2})} \leq C_1\|m\|_{L^1(B_{1/2})} + C_2\|\tilde{f}\|_{L^{\infty}(B_1)},
 	\end{equation}
where $C_1>0$ and $C_2>0$ are universal constants.
\end{lemma}

\begin{proof}
To estimate the $L^{\frac{d}{d-1}}$-norm of $m$ over $B_{1/2}$, we compute 
	\[
	\sup \left\{ \int_{B_{1/2}} m \, f \right\}, 
	\]
where the supremum is taken over the set of functions $f \in \mathcal{C}^{\infty}_c(B_{1/2})$, where $f$ is a non-negative function in $B_{1/2}$ and $\|f \|_{L^d(\mathbb{R}^d)} \leq 1$.

Since the coefficients $F_{ij}$ are locally Lipschitz continuous with respect to the space $\mathcal{S}(d)$, we may apply the Theorem 9.15 and the Theorem 9.19 in \cite{GilTru01} to obtain the existence of a solution $z$ to
\begin{equation}\label{eq_z}
	\begin{cases}
		\mathcal{L} z\,=\,f&\;\;\;\;\;\mbox{in}\;\;\;\;\;B_1\\
		z =\,0&\;\;\;\;\;\mbox{on}\;\;\;\;\;\partial B_1,
	\end{cases}
\end{equation}
where the operator $\mathcal{L}$ is defined as in \eqref{op:L} and $z$ belongs to $\mathcal{C}^{2, \alpha}$.

We take a function $\phi \in \mathcal{C}_c^{\infty}(B_{3/4})$ such that $\phi \equiv 1$ in $ B_{1/2}$ and  $\big| {\partial}^{k}\phi/\partial x^{k} \big| \leq C$. Since $z$ and $\phi$ are smooth functions, we have
	\begin{equation}\label{eq_operatorL}
	\mathcal{L}(z\phi) = \phi(\mathcal{L} z) + z(\mathcal{L}\phi) + 2
  F_{ij}(D^2 u)\dfrac{\partial z}{\partial x_i}\cdot	\dfrac{\partial \phi}{\partial x_j}.
  \end{equation}
  
  It follows from the fact $f \in \mathcal{C}^{\infty}_c(B_{1/2})$ and $\phi \equiv 1$ in $ B_{1/2}$,
	\[	
	\int_{B_{1/2}} m f = \int_{B_{1/2}} m f \phi = \int_{B_1} m \phi(\mathcal{L}z).
	\]

On the other hand, multiplying \eqref{eq_operatorL} by $m$, and integrating with respect to $B_1$, we obtain
	\[
	\int_{B_1} m \phi(\mathcal{L}z) = \int_{B_1} m \mathcal{L}(z \phi) - \int_{B_1} m z (\mathcal{L}\phi) - 2 \int_{B_1}m \left[ F_{ij}(D^2 u)\dfrac{\partial z}{\partial x_i}\cdot	\dfrac{\partial \phi}{\partial x_j}\right].
	\]
 and so
	\begin{equation}\label{inequality_main}
	\int_{B_{1/2}} m f = \int_{B_1} m \mathcal{L}(z \phi) - \int_{B_1} 		m z (\mathcal{L}\phi) - 2 \int_{B_1}m \left[ F_{ij}(D^2 	u)\dfrac{\partial z}{\partial x_i}\cdot	\dfrac{\partial \phi}		{\partial x_j}\right].
	\end{equation}
	
	We estimate the expression on the right hand side of \eqref{inequality_main}. We start with the first term. From the definition of $\mathcal{L}$ and weak solution, we have that
	\[
	\int_{B_1} m \mathcal{L}(z \phi) = \int_{B_1} m F_{ij}(D^2 u)(z\phi)_{x_ix_j} =\int_{B_1} \tilde{f} z \phi,
	\]
and therefore,
\begin{equation}\label{eq_I}
\int_{B_1} m \,\mathcal{L}(z \phi) \leq C \|\tilde{f}\|_{L^{\infty}(B_1)}.
\end{equation}	

Using the definition of $\mathcal{L}$, the fact that $\phi \in \mathcal{C}_c^{\infty}(B_{3/4})$ and the second derivatives of $\phi$ are bounded by $C$ and $F_{ij}$ satifies \eqref{Derivative_F} for all $i, j$, we obtain
\begin{equation}\label{eq_II}
	\int_{B_1} m z (\mathcal{L}\phi) \leq C\int_{B_{3/4}} m.
\end{equation}

Note that $|\partial z/\partial x_i |\leq\| \nabla z\|$ for every $i$. By applying the triangle's and Cauchy-Schwarz inequalities, we get that
    \[
	\int_{B_1}m \left[ F_{ij}(D^2 u)		\dfrac{\partial z}{\partial x_i}\cdot	\dfrac{\partial \phi}{\partial x_j}\right] \leq C\int_{B_{3/4}} m \| \nabla z\|.
	\]
and consequently, using H\"older's inequality on the right-hand side, we compute
\begin{equation}\label{eq_III}
	\int_{B_1}m \left[ F_{ij}(D^2 u)		\dfrac{\partial z}{\partial x_i}\cdot	\dfrac{\partial \phi}{\partial x_j}\right] \leq C\left( \int_{B_{3/4}} m \right)^{1/2}\cdot \left(\int_{B_1} m \|\nabla z\|^2\right)^{1/2}
\end{equation}

According to the fact first noticed in the proof of Theorem 2 in \cite{Evans1985} to estimate the gradient  of $z.$
\begin{equation}\label{est:gradient-z}
\int_{B_1} m \|\nabla z\|^2 \leq C \int_{B_1} m \left(F_{ij}(D^2u) z_{x_i}z_{x_j} \right) = C  \int_{B_1} m\left(F_{ij}(D^2 u)(z^2)_{x_ix_j} - 2 fz\right).
\end{equation}
By using the chain rule, we compute $(z^2)_{x_ix_j}$, and hence 
	\begin{equation}\label{chainrule:application}
C \int_{B_1} m \left(F_{ij}(D^2u) z_{x_i}z_{x_j} \right) = C  \int_{B_1} m\left(F_{ij}(D^2 u)(z^2)_{x_ix_j} - 2 fz\right).
	\end{equation}
In view of \eqref{est:gradient-z} and \eqref{chainrule:application}, we get that
\[
\int_{B_1} m \|\nabla z\|^2 \leq C  \int_{B_1} m\left(F_{ij}(D^2 u)(z^2)_{x_ix_j} - 2 fz\right).
\]
Applying integration by parts combined with the fact that $m$ is a weak solution of \eqref{eq:weak_solution_m} and  $z^2|_{\partial B_1} = 0$, $\nabla(z^2)|_{\partial B_1} = 0$, we obtain
 	\[
 	\int_{B_1} (m F_{ij}(D^2 u))(z^2)_{x_ix_j} \, = \int_{B_1}\tilde{f} z^2\,
 	\]
and so 
\begin{equation}\label{eq_IV}
	\int_{B_1} m \|\nabla z\|^2 \leq C \|\tilde{f} \|_{L^{\infty}(B_1)} +  C\int_{B_1} m f.
\end{equation}
Combining now \eqref{inequality_main}-\eqref{eq_IV} yield
	\[
	\int_{B_{1/2}}m f\leq C\int_{B_{3/4}} m + C \|\tilde{f} \|_{L^{\infty}(B_1)}.
	\]
 
 Therefore
	\[
	\left[\int_{B_{1/2}}m^{d/{d-1}}\right]^{{d-1}/d}\leq C\int_{B_{3/4}} m + C \|\tilde{f} \|_{L^{\infty}(B_1)}.
	\]
Since $m \geq 0$, we have $\displaystyle\int_{B_{3/4}}  m \leq \displaystyle\int_{B_1}  m .$ Therefore 
	\[
	\left[\int_{B_{1/2}}m^{d/{d-1}}\right]^{{d-1}/d}\leq C\int_{B_1} m + C \|\tilde{f} \|_{L^{\infty}(B_1)}.
	\]
Applying Lemma \ref{lemma_int} on the left hand-side of the previous inequality, we get 
\[
\left[\int_{B_{1/2}}m^{d/{d-1}}\right]^{{d-1}/d}\leq C_1\| m \|_{B_{1/2}}+ C_2 \|\tilde{f} \|_{L^{\infty}(B_1)}.
\]
This completes the proof.
\end{proof}

\begin{remark}
In the proofs of the Lemmas \ref{lemma_int} and  \ref{Gain-of-Integrability}, we adapt the arguments used by  E. Fabes and D. Stroock in  \cite{fabesstroock} for the case of the right-hand side $\tilde{f}$ be discontinuous and bounded function.
\end{remark}

It is worth noticing that $L^p$-viscosity solutions to
\[
F(D^2 u) = \mu \: \: \text{in} \: \: B_1,
\]
with $\mu \in L^p(B_1)$ and $d<p$ belong to $W^{2, p}_{loc}(B_1)$, see \cite[Lemma 3.1 ]{CafCraKocSwi96} for more details. Combining the gains of integrability for $0\le m \in L^1$ in Lemma \ref{Gain-of-Integrability} with the classical result  \cite[Lemma 3.1 ]{CafCraKocSwi96}, we enhance the regularity for $u$. This is the content of our next result:

\begin{theorem}\label{Gains-of-Regularity}
Let $(u, m)$  be a weak solution to \eqref{eq_main}. Suppose A\ref{A1}-A\ref{A2} hold and $\tilde{f}$ is a bounded function in $B_1$, then 
	\[	
	u \in  W^{2, \frac{d(p-1)}{d-1}}_{loc}(B_{1});
	\]
	and, we have the estimate
	\[
	\| u\|_{W^{2, \frac{d(p-1)}{d-1}}(B_{1/2})} \leq C\left(\|u\|_{L^{\infty}(B_1)} +  \|m\|_{L^1(B_1)}^{\frac{1}{p-1}} \right),
	\]
	where $C>0$ is a constant. 
\end{theorem}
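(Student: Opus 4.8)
The plan is to combine the gain of integrability established in Lemma \ref{Gain-of-Integrability} with the $W^{2,p}$-regularity theory for $L^p$-viscosity solutions of fully nonlinear equations, namely \cite[Lemma 3.1]{CafCraKocSwi96}. First I would observe that $(u,m)$ being a weak solution means, in particular, that $m\in L^1(B_1)$ is a nonnegative weak solution of the double-divergence equation $\left(F_{ij}(D^2u)\,m\right)_{x_ix_j}=\tilde f$ in $B_1$ with $\tilde f=\frac{\gamma_+}{p}\chi_{\{u>0\}}-\frac{\gamma_-}{p}\chi_{\{u<0\}}$, which is bounded by hypothesis. Therefore Lemma \ref{Gain-of-Integrability} applies and yields $m\in L^{d/(d-1)}_{loc}(B_1)$ together with the quantitative bound
\[
\|m\|_{L^{d/(d-1)}(B_{1/2})}\leq C_1\|m\|_{L^1(B_1)}+C_2\|\tilde f\|_{L^\infty(B_1)}.
\]

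Next I would pass this integrability to $u$ through the first equation $F(D^2u)=m^{1/(p-1)}$. Since $m\in L^{d/(d-1)}_{loc}$, we have $m^{1/(p-1)}\in L^{\frac{d(p-1)}{d-1}}_{loc}(B_1)$, and because $p>1$ and $p>d/2$ (recall the standing assumption on the exponent in the definition of the admissible Sobolev spaces) one checks that $q:=\frac{d(p-1)}{d-1}>d$, so the hypothesis $q>d$ of \cite[Lemma 3.1]{CafCraKocSwi96} is met; here A\ref{A1} (and the convexity A\ref{A2}, which is what \cite[Lemma 3.1]{CafCraKocSwi96} uses for interior $W^{2,p}$ estimates) provides the structural conditions on $F$. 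Applying that lemma to the $L^p$-viscosity solution $u$ of $F(D^2u)=m^{1/(p-1)}$ gives $u\in W^{2,q}_{loc}(B_1)$, which is part (i), and also the interior estimate
\[
\|u\|_{W^{2,q}(B_{1/2})}\leq C\left(\|u\|_{L^\infty(B_1)}+\|m^{1/(p-1)}\|_{L^{q}(B_{3/4})}\right).
\]
To finish part (ii) I would rewrite $\|m^{1/(p-1)}\|_{L^{q}(B_{3/4})}=\|m\|_{L^{d/(d-1)}(B_{3/4})}^{1/(p-1)}$ and then estimate $\|m\|_{L^{d/(d-1)}(B_{3/4})}$ by the bound from Lemma \ref{Gain-of-Integrability} (applied on an appropriate pair of balls, e.g. after a routine rescaling/covering to replace $B_{1/2}\subset B_1$ by $B_{3/4}\subset B_1$), together with the trivial bound $\|\tilde f\|_{L^\infty(B_1)}\leq \frac{\gamma_++\gamma_-}{p}$, which is an absolute constant absorbed into $C$; since $\|m\|_{L^1(B_1)}$ already appears on the right-hand side, one arrives at $\|u\|_{W^{2,q}(B_{1/2})}\leq C\big(\|u\|_{L^\infty(B_1)}+\|m\|_{L^1(B_1)}^{1/(p-1)}\big)$, using $(a+b)^{1/(p-1)}\lesssim a^{1/(p-1)}+b^{1/(p-1)}$ when $p-1\le 1$ and Jensen otherwise to split the sum.

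The main obstacle I anticipate is bookkeeping rather than conceptual: one must check carefully that the exponent $q=\frac{d(p-1)}{d-1}$ indeed exceeds $d$ so that \cite[Lemma 3.1]{CafCraKocSwi96} is applicable (this requires $p-1>\frac{d-1}{d}$, i.e. $p>2-\frac1d$, which should be arranged by the hypotheses on $p$ or else stated as an implicit restriction), and that the chain of nested balls $B_{1/2}\subset B_{3/4}\subset B_1$ is compatible between the two cited results — Lemma \ref{Gain-of-Integrability} is stated from $B_1$ to $B_{1/2}$ while \cite[Lemma 3.1]{CafCraKocSwi96} needs $m^{1/(p-1)}$ controlled on a ball slightly larger than $B_{1/2}$, so a short covering/scaling remark is needed to align them. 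Everything else is a direct concatenation of the two estimates and the elementary inequality relating $L^q$ norms of $m^{1/(p-1)}$ and $L^{d/(d-1)}$ norms of $m$.
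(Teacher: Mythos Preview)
Your proposal is correct and follows essentially the same route as the paper: apply Lemma \ref{Gain-of-Integrability} to obtain $m\in L^{d/(d-1)}$, deduce $m^{1/(p-1)}\in L^{d(p-1)/(d-1)}$, and then invoke the $W^{2,p}$-regularity theory of \cite{CafCraKocSwi96} for the first equation. Your write-up is in fact more careful than the paper's own proof about the nested balls, the exponent restriction $q>d$, and the absorption of $\|\tilde f\|_{L^\infty}$ into the constant.
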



\begin{proof}
We combine classic regularity results for Sobolev spaces with Lemma \ref{Gain-of-Integrability} to prove the statements. Because of $W^{2,p}$-regularity theory and the hypothesis on the operator $F$, the result follows if the right-hand side of the first equation in \eqref{eq_main} belongs to $L^{\frac{d(p-1)}{d-1}}(B_{1/2})$.
 
It follows from Lemma \ref{Gain-of-Integrability} that $m \in L^{\frac{d}{d-1}}(B_{1/2})$ with the estimate \eqref{est:gain-of-integrability}. Then, there exist constants $C_1>0$ and $C_2>0$ such that
\[ 
	\int_{B_{1/2}} |m^{\frac{1}{p-1}}|^{\frac{d(p-1)}{d-1}} \leq C_1 \|m\|_{L^1(B_{3/4})} + C_2 \| \tilde{f}\|_{L^{\infty}(B_1)}.
\] 
The desired conclusion follows since $m\in L^1(B_1)$ and $\tilde{f}$ is a bounded function. 

As a consequence, we have the existence of a constant $C>0$ so that the estimate for $u$ holds. 
\end{proof}


\begin{corollary}
Let $(u, m)$  be a weak solution to \eqref{eq_main}. Assume $p > d$ and A\ref{A1}-A\ref{A2} hold. Then $u \in \mathcal{C}_{loc}^{1, \alpha}(B_1)$, with the estimate
\[
\|u\|_{\mathcal{C}^{1, \alpha}(B_{1/2})} \leq \left(\|u\|_{L^{\infty}(B_1)} +  \|m\|_{L^1(B_1)}^{\frac{1}{p-1}} \right),
\]
where $C$ is a positive constant and $\alpha$ is given by
\[
\alpha := 1 - \dfrac{d-1}{p-1}.
\]
\end{corollary}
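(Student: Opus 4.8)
The plan is to combine the improved Sobolev regularity of Theorem~\ref{Gains-of-Regularity} with the Morrey--Sobolev embedding theorem. First I would observe that the corollary's hypotheses already place us in the setting of Theorem~\ref{Gains-of-Regularity}: the source term of the second equation of \eqref{eq_main} is $\tilde{f} = \frac{\gamma_+}{p}\chi_{\{u>0\}} - \frac{\gamma_-}{p}\chi_{\{u<0\}}$, which satisfies $\|\tilde{f}\|_{L^{\infty}(B_1)} \leq \frac{1}{p}\max(\gamma_+, |\gamma_-|) < \infty$, so $\tilde{f}$ is automatically a bounded function and no extra assumption is needed. Hence Theorem~\ref{Gains-of-Regularity} applies and yields $u \in W^{2,q}_{loc}(B_1)$ with $q := \frac{d(p-1)}{d-1}$, together with the estimate
\[
\|u\|_{W^{2,q}(B_{1/2})} \leq C\left(\|u\|_{L^{\infty}(B_1)} + \|m\|_{L^1(B_1)}^{\frac{1}{p-1}}\right).
\]

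Next I would check that the condition $p > d$ forces $q > d$: since $p - 1 > d - 1 > 0$, we get $q = \frac{d(p-1)}{d-1} > d$. Therefore $\nabla u \in W^{1,q}_{loc}(B_1)$ with $q > d$, and the Morrey embedding theorem gives $W^{1,q}(B_{1/2}) \hookrightarrow \mathcal{C}^{0,\,1 - d/q}(\overline{B_{1/2}})$ applied componentwise to $\nabla u$, whence $u \in \mathcal{C}^{1,\alpha}_{loc}(B_1)$ with
\[
\alpha = 1 - \frac{d}{q} = 1 - \frac{d(d-1)}{d(p-1)} = 1 - \frac{d-1}{p-1},
\]
which is exactly the claimed exponent and lies in $(0,1)$ precisely because $p > d$. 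Composing the embedding inequality $\|u\|_{\mathcal{C}^{1,\alpha}(B_{1/2})} \leq C\|u\|_{W^{2,q}(B_{1/2})}$, where the constant depends only on $d$ and $q$ (hence only on $d$ and $p$), with the estimate from the previous paragraph delivers the stated bound.

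As for difficulties, this argument is essentially bookkeeping once Theorem~\ref{Gains-of-Regularity} is available, so there is no genuine obstacle. The only points requiring a little care are: (a) respecting the interior nature of the estimates --- one may wish to run Theorem~\ref{Gains-of-Regularity} on a slightly larger ball such as $B_{3/4}$ and then use $W^{2,q}(B_{3/4}) \hookrightarrow \mathcal{C}^{1,\alpha}(\overline{B_{1/2}})$, absorbing the resulting universal constants; and (b) the exponent arithmetic confirming $d/q = (d-1)/(p-1)$. I note in passing that the displayed estimate in the statement is missing the constant $C$ in front of its right-hand side; the proof naturally produces $\|u\|_{\mathcal{C}^{1,\alpha}(B_{1/2})} \leq C\big(\|u\|_{L^{\infty}(B_1)} + \|m\|_{L^1(B_1)}^{1/(p-1)}\big)$.
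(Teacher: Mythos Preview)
Your proposal is correct and follows essentially the same approach as the paper, which states only that the corollary follows immediately from Theorem~\ref{Gains-of-Regularity} combined with general Sobolev inequalities. You have simply made explicit the details the paper leaves implicit---namely that $p>d$ forces $q=\tfrac{d(p-1)}{d-1}>d$ so that the Morrey embedding $W^{2,q}\hookrightarrow\mathcal{C}^{1,\alpha}$ applies with $\alpha=1-\tfrac{d}{q}=1-\tfrac{d-1}{p-1}$---and your remark about the missing constant $C$ in the displayed estimate is well taken.
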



\begin{proof}
The corollary follows from Theorem \ref{Gains-of-Regularity} with general Sobolev inequalities in Sobolev spaces \cite{Evansbook}.
\end{proof}

\bigskip

\noindent{\bf Acknowledgements} The authors are grateful to H\'ector Chang-Lara, Edgard Pimentel, Boyan Sirakov and Hugo Tavares for their interest, suggestions, and insightful comments on this manuscript. We would also like to thank the anonymous referees for their helpful comments and suggestions, which helped us improve the paper. PA is partially supported by the Portuguese government through FCT-Fundação para a Ciência e a Tecnologia, I.P.,
under the projects UID/MAT/04459/2020 and UIDP/00208/2020. JC is partially supported by FAPERJ-Brazil (\#E26/202.075/2020) and CNPq-Brazil under Grant No. 408169/2023-0.

\bigskip

	\bibliographystyle{plain}
	\bibliography{biblio}
	
	\bigskip

	\vspace{1cm}

\noindent\textsc{P\^edra D. S. Andrade (Corresponding Author)}\\
\noindent\small{ Department of Mathematics, Paris Lodron Universit\"at Salzburg, Hellbrunnerstrasse 34, A-5020 Salzburg, Austria.\\
\noindent\texttt{pedra.andrade@plus.ac.at}}

\bigskip

\noindent\textsc{Julio C. Correa}\\
\noindent\small{Instituto de Matem\'atica e Estat\'istica, Universidade do Estado do Rio de Janeiro, 20550-013, Maracan\~a, Rio de Janeiro - RJ, Brazil.\\\noindent\texttt{julio.correa@ime.uerj.br}.}

\bigskip

\end{document}